\theoremstyle{plain}
\newtheorem{theorem}{Theorem}[section]
\newtheorem{lemma}[theorem]{Lemma}
\theoremstyle{definition}
\newtheorem{definition}[theorem]{Definition}
\newtheorem{remark}[theorem]{Remark}
\numberwithin{equation}{section}
\newcommand{\ba}{{\bf a}}
\newcommand{\bb}{{\bf b}}
\newcommand{\be}{{\bf e}}
\newcommand{\bx}{{\bf x}}
\newcommand{\bn}{{\bf n}}
\newcommand{\bu}{{\bf u}}
\newcommand{\bv}{{\bf v}}
\newcommand{\bw}{{\bf w}}
\newcommand{\bV}{{\bf V}}
\newcommand{\bC}{{\bf C}}
\newcommand{\bD}{{\bf D}}
\newcommand{\bbf}{{\bf f}}
\newcommand{\bzero}{\bf{0}}
\newcommand{\Pih}{\Pi_h}
\newcommand{\Pz}{\mathcal{P}_0}
\newcommand{\cR}{\mathcal{R}}
\newcommand{\jump}[1]{[ #1 ]}
\newcommand{\avg}[1]{\{ #1\}}
\newcommand{\Hone}{H^1(\Omega)}
\newcommand{\Honezd}{[H^1_0(\Omega)]^d}
 \newcommand{\Ltwoz}{L^2_0(\Omega)}
 \newcommand{\Vh}{\bV_h}
  \newcommand{\Ch}{\bC_h}
  \newcommand{\Dh}{\bD_h}
\newcommand{\Th}{\mathcal{T}_h}
\newcommand{\Eh}{\mathcal{E}_h}
\newcommand{\Eho}{\mathcal{E}_h^o}
\newcommand{\Ehb}{\mathcal{E}_h^b}
\newcommand{\bne}{\bn_e}
\newcommand{\norm}[1]{\lVert #1\rVert}
\newcommand{\enorm}[1]{\lVert #1\rVert_{\mathcal{E}}}
\newcommand{\trinorm}[1]{{\left\vert\kern-0.25ex\left\vert\kern-0.25ex\left\vert #1 \right\vert\kern-0.25ex\right\vert\kern-0.25ex\right\vert}}
\newcommand{\buh}{\bu_h}
\newcommand{\ph}{p_h}
\newcommand{\half}{\frac{1}{2}}
\title{A low-cost, parameter-free, and pressure-robust enriched Galerkin method for the Stokes equations\footnote{Submitted to the editors in 2022.}}
\author{Seulip Lee,\thanks{Department of Mathematics, University of Georgia, Athens, GA 30602 (\texttt{seulip.lee@uga.edu})}
\and Lin Mu,\thanks{Department of Mathematics, University of Georgia, Athens, GA 30602 (\texttt{linmu@uga.edu})}}
\date{
}
\begin{document}
	\maketitle
	
	\begin{abstract}
		In this paper, we propose a low-cost, parameter-free, and pressure-robust Stokes solver based on the enriched Galerkin (EG) method with a discontinuous velocity enrichment function. The EG method employs the interior penalty discontinuous Galerkin (IPDG) formulation to weakly impose the continuity of the velocity function. However, the symmetric IPDG formulation, despite of its advantage of symmetry, requires a lot of computational effort to choose an optimal penalty parameter and to compute different trace terms. In order to reduce such effort, we replace the derivatives of the velocity function with its weak derivatives computed by the geometric data of elements. Therefore, our modified EG (mEG) method is a parameter-free numerical scheme which has reduced computational complexity as well as optimal rates of convergence. Moreover, we achieve pressure-robustness for the mEG method by employing a velocity reconstruction operator on the load vector on the right-hand side of the discrete system. The theoretical results are confirmed through numerical experiments with two- and three- dimensional examples.
		\vskip 10pt
		\noindent\textbf{Keywords:} enriched Galerkin finite element methods; viscous Stokes equations; interior penalty methods; weak derivatives; parameter-free; pressure-robust.
	\end{abstract}


\section{Introduction}
\label{sec:intro}

We consider the Stokes equations in a bounded domain $\Omega\subset \mathbb{R}^d$ for $d=2,3$ with simply connected Lipschitz boundary $\partial\Omega$: Find fluid velocity $\bu:\Omega\rightarrow\mathbb{R}^d$ and pressure $p:\Omega\rightarrow\mathbb{R}$ such that
\begin{subequations}\label{sys: governing}
\begin{alignat}{2}
-\nu\Delta \bu + \nabla p & = \bbf && \quad \text{in } \Omega, \label{eqn: governing1} \\
\nabla\cdot \bu &= 0 && \quad \text{in } \Omega,  \label{eqn: governing2}\\
\bu & = 0 && \quad \text{on }\partial \Omega, \label{eqn: governing3}
\end{alignat}
\end{subequations}
where $\nu>0$ is a constant fluid viscosity, and $\bbf$ is a given body force.

In the finite element framework, finite dimensional velocity and pressure spaces must satisfy the discrete inf-sup stability condition \cite{Ladyzhenskaya69,Babuska73,Brezzi74} to guarantee the well-posedness of the discrete problem corresponding to \eqref{sys: governing}.
Various mixed finite element methods (FEMs) have been developed under the discrete inf-sup condition, such as conforming and non-conforming mixed FEMs \cite{TaylorHood73,BernardiRaugel85,CrouzeixRaviart73}, discontinuous Galerkin methods 
\cite{HansboLarson08,GiraultEtAl05}, weak Galerkin methods \cite{wang2016weak,MuWYZ18}, and enriched Galerkin methods \cite{ChaabaneEtAl18,YiEtAl22-Stokes}.
These methods have been widely used for numerical simulations of the Stokes equations while providing their different advantages.

For the Stokes equations, discontinuous Galerkin (DG) methods have received attention as advanced numerical methods which have locally conservative divergence-free condition and geometric flexibility on meshes.
The interior penalty discontinuous Galerkin (IPDG) method is an example of DG methods, and it employs penalties to impose weakly the continuity of the solutions and boundary conditions.
The penalty formulation has been also adopted in enriched Galerkin methods for the Poisson equation \cite{sun2009locally,lee2016locally} and $C^0$ interior penalty methods for the biharmonic equation \cite{brenner2005c}.
Even though the IPDG method has been widely applied in numerical PDE solvers, it has been criticized for the difficulty of choosing proper penalty parameters.
It is well-known that a sufficiently large penalty parameter is required to ensure the stability in the symmetric IPDG method.
However, in numerical simulations, a large penalty parameter may cause the increased condition number of the stiffness matrix, which leads to inaccurate simulation results.
Also, the lower bounds for penalty parameters \cite{epshteyn2007estimation,ainsworth2007posteriori,ainsworth2010fully} does not seem practical for general meshes because the bounds depend on the angles of the mesh elements.
Therefore, we pay special attention to constructing a parameter-free scheme to resolve the difficulty on penalty parameters. 
Various parameter-free DG methods have been introduced for second-order elliptic problems by introducing extra degrees of freedom on edges/faces and auxiliary variables, e.g., hybrid high-order (HHO) methods \cite{di2015hybrid}, hybridizable discontinuous Galerkin (HDG) methods \cite{cockburn2009unified}, and weak Galerkin (WG) methods \cite{wang2013weak}.
By rewriting DG basis functions in the WG framework, another parameter-free DG method \cite{wang2014modified}, which is called a modified WG method, has been developed without increasing degrees of freedom.
Our work is inspired by this idea.

Our main goal in this paper is to develop a low-cost and parameter-free Stokes solver with the optimal rates in convergence.
The enriched Galerkin (EG) velocity and pressure spaces have been presented in 
\cite{YiEtAl22-Stokes} for solving the Stokes equations with minimal number of degrees of freedom.
The velocity space consists of linear Lagrange polynomials enriched by a discontinuous, piecewise linear, and mean-zero vector function per element, while the pressure is approximated by piecewise constant functions.
That is, a velocity function $\bv$ can be expressed as $\bv=\bv^C+\bv^D$, where $\bv^C$ is a continuous linear Lagrange polynomial and $\bv^D$ is a discontinuous piecewise linear enrichment function.
Compared to the previous EG method \cite{YiEtAl22-Stokes} using the IPDG formulation, our modified EG (mEG) method is developed
by replacing the derivatives of velocity functions with their weak derivatives \cite{mu2015modified}.
The weak derivatives are locally computed in each element by integration by parts using the interior function $\bv$ and the average of $\bv$ along edges/faces (details will be provided in Section~\ref{sec:EGwithWG}).
The weak derivatives for $\bv^C$ remain the same as $\nabla\bv^C$ and $\nabla\cdot\bv^C$.
For the discontinuous components $\bv^D$, the weak derivatives are computed as piecewise constant functions by using the geometric data of each element, e.g., vertices, edges/faces, and area/volume.
In the mEG method, the bilinear forms are simply assembled by the $L^2$-inner product of the weak derivatives and a parameter-free penalty term.
The other trace terms in the IPDG formulation are not needed.
Thus, the mEG method is parameter-free, and its implementation is guaranteed to require reduced computational complexity.
In the theoretical part, the coercivity and continuity of the bilinear form for the diffusion term in \eqref{eqn: governing1} hold true with no penalty parameter.
Since the bilinear form for the divergence term \eqref{eqn: governing2} remains the same as the previous EG method, the discrete inf-sup condition of the mEG method can be inherited from the previous one.
Through two- and three-dimensional examples, we compare
the numerical performance of our modified EG method and the previous EG method with different penalty parameters.
The numerical results demonstrate that our mEG method shows uniform stability and outperforms the previous method.

Pressure-robustness is an important property of numerical methods for the Stokes equations in the case of small viscosity $\nu\ll1$.
In the case, inf-sup stable pairs may not guarantee accurate numerical velocity solutions.
More precisely, in standard mixed FEMs including the EG method \cite{YiEtAl22-Stokes}, 
the velocity error bounds are coupled with a pressure term which is inversely proportional to the viscosity $\nu$.
Thus, the numerical simulation for velocity may be destroyed by the factor $1/\nu$.
In contrast, pressure-robust schemes can eliminate the pressure term from the velocity error bounds in the error estimates, so they guarantee accurate numerical velocity and pressure simultaneously.
In some mixed FEMs, the pressure-robustness has been achieved by applying a velocity reconstruction operator \cite{Linke12} to the load vector on the right hand side
(see \cite{LinkeMerdon16,GaugerLS19,Mu20,MuYZ21(2),LiZikatanov22,zhao2020pressure,HuLeeMuYi} as examples).
To develop a pressure-robust scheme corresponding to the mEG method, we employ the velocity reconstruction operator \cite{HuLeeMuYi} mapping the velocity test function into the first-order Brezzi-Douglas-Marini space.
Therefore, the pressure-robustness in the mEG method is achieved without compromising the optimal rates in convergence.

The remaining sections of this paper are structured as follows:
Some important definitions, notations, and trace properties are introduced in Section \ref{sec:prelim}.
In Section \ref{sec:EGwithWG}, we recall the EG method \cite{YiEtAl22-Stokes} and propose the modified EG (mEG) method without a penalty parameter.
In Section \ref{sec:wellerror}, we prove well-posedness and error estimates of our mEG method.
A pressure-robust mEG method is presented and its robust error estimates are proved in Section \ref{sec:PRmEG}.
In Section \ref{sec:nume}, we validate our theoretical results through numerical experiments in two and three dimensions.
We summarize our contribution in this paper and discuss related future research in Section \ref{sec:conclusion}.


\section{Preliminaries}
\label{sec:prelim}
To begin with, we introduce some notations and definitions used throughout this paper.
For a bounded Lipschitz domain $\mathcal{D}\in\mathbb{R}^d$, where $d=2,3$, we denote the Sobolev space as $H^s(\mathcal{D})$ for a real number $s\geq 0$.
Its norm and seminorm are denoted by $\|\cdot\|_{s,\mathcal{D}}$ and $|\cdot|_{s,\mathcal{D}}$, respectively.
The space $H^0(\mathcal{D})$ coincides with $L^2(\mathcal{D})$, and the $L^2$-inner product is denoted by $(\cdot,\cdot)_\mathcal{D}$.
When $\mathcal{D}=\Omega$, the subscript $\mathcal{D}$ will be omitted.
These notations are generalized to vector- and tensor-valued Sobolev spaces.
The notation $H_0^1(\mathcal{D})$ means the space of $v\in H^1(\mathcal{D})$ such that $v=0$ on $\partial\mathcal{D}$, and $L_0^2(\mathcal{D})$ means the space of $v\in L^2(\mathcal{D})$ such that $(v,1)_\mathcal{D}=0$.
The polynomial spaces of degree less than or equal to $k$ are denoted as $P_k(\mathcal{D})$.

For discrete schemes, we assume that there exists a shape-regular triangulation $\Th$ of $\Omega$ whose elements $T\in \Th$ are triangles in two dimensions and tetrahedrons in three dimensions.
Then, $\Eh$ denotes the collection of all edges/faces in $\Th$, and $\Eh=\Eho\cup\Ehb$, where $\Eho$ is the collection of all the interior edges/faces and $\Ehb$ is that of the boundary edges/faces.
For each element $T\in\Th$, let $h_T$ denote the diameter of $T$ and $\bn_T$ (or $\bn$) denote the outward unit normal vector on $\partial T$.
For each interior edge/face $e\in \Eho$ shared by two adjacent elements $T^+$ and $T^-$, we let $\bn_e$ be the unit normal vector from $T^+$ to $T^-$.
For each $e\in\Ehb$, $\bn_e$ denotes the outward unit normal vector on $\partial \Omega$.

In a shape-regular triangulation $\Th$, the broken Sobolev space is defined as
\begin{equation*}
    H^s(\Th)=\{v\in L^2(\Omega):v|_T\in H^s(T),\ \forall T\in\Th\},
\end{equation*}
equipped with the norm
\begin{equation*}
    \norm{v}_{s,\Th}=\left(\sum_{T\in\Th} \norm{v}^2_{s,T}\right)^{1/2}.
\end{equation*}
When $s=0$, the $L^2$-inner product on $\Th$ is denoted by $(\cdot,\cdot)_{\Th}$.
Also, the $L^2$-inner product on $\Eh$ is denoted as $\langle\cdot,\cdot\rangle_{\Eh}$, and the $L^2$-norm on $\Eh$ is defined as
\begin{equation*}
    \norm{v}_{0,\Eh}=\left(\sum_{e\in\Eh} \norm{v}^2_{0,e}\right)^{1/2}.
\end{equation*}
The piecewise polynomial space corresponding to the broken Sobolev space is defined as
\begin{equation*}
    P_k(\Th) = \{v\in L^2(\Omega): v|_T\in P_k(T),\ \forall T\in\Th\}.
\end{equation*}
In addition, the jump and average of $v$ on $e\in \Eh$ are defined as
\begin{equation*}
    \jump{v}=\left\{\begin{array}{cl}
        v^+-v^- & \text{on}\ e\in \Eho, \\
        v & \text{on}\ e\in\Ehb,
    \end{array}\right.
    \quad
    \avg{v}=\left\{\begin{array}{cl}
        (v^++v^-)/2 & \text{on}\ e\in \Eho, \\
        v & \text{on}\ e\in\Ehb,
    \end{array}\right.
\end{equation*}
where $v^{\pm}$ is the trace of $v|_{T^\pm}$ on $e\in \partial T^+\cap\partial T^-$. 
These definitions are extended to vector- and tensor-valued functions.

We also introduce the trace properties mainly used in this paper. For any vector function $\bv$ and scalar function $q$, we have
\begin{equation}\label{eqn: jump-avg}
    \sum_{T\in\Th} \langle\bv\cdot\bn,q\rangle_{\partial T} = \langle \jump{\bv}\cdot\bne,\avg{q}\rangle_{\Eh}+\langle \avg{\bv}\cdot \bne,\jump{q}\rangle_{\Eho}.
\end{equation}
For any function $v\in H^1(T)$, the following trace inequality holds
\begin{equation}
    \norm{v}_{0,e}^2\leq C\left(h_T^{-1}\norm{v}_{0,T}^2+h_T\norm{\nabla v}_{0,T}^2\right).\label{eqn: trace}
\end{equation}


\section{A Modified Enriched Galerkin Method}
\label{sec:EGwithWG}
We consider the weak formulation for the Stokes problem \eqref{sys: governing}: Find $(\bu, p) \in \Honezd \times \Ltwoz$ such that
\begin{subequations}\label{sys: weak}
\begin{alignat}{2}
\nu   (\nabla \bu, \nabla \bv) - (\nabla \cdot \bv, p) & = (\bbf, \bv), && \quad \forall \bv \in  \Honezd, \label{eqn: weak1} 
\\
(\nabla \cdot \bu, q) & = 0, && \quad \forall q \in  \Ltwoz. \label{eqn: weak2}
\end{alignat}
\end{subequations}
We recall the EG method \cite{YiEtAl22-Stokes} with its finite dimensional velocity and pressure spaces, and then introduce weak derivatives to establish the modified EG method in this section.


\subsection{Standard enriched Galerkin method with interior penalty}

We first introduce the EG finite dimensional velocity and pressure spaces.
Let us denote the space of continuous components for velocity as
\begin{equation*}
\Ch = \{\bv^C \in \Honezd : \bv^C|_{T} \in [P_1(T)]^d,\ \forall T \in \Th \}.
\end{equation*}
The space of discontinuous components for velocity is defined as
\begin{equation*}
    \Dh = \{\bv^D \in L^2(\Omega) : \bv^D|_{T} = c (\bx - \bx_T),\ c \in \mathbb{R},\ \forall T \in \Th\},
\end{equation*}
where $\bx_T$ is the barycenter of $T\in\Th$.
Then, the EG finite dimensional velocity space is defined as
\begin{equation*}
    \Vh = \Ch \oplus \Dh,
\end{equation*}
that is, any function $\bv\in\Vh$ consists of unique continuous and discontinuous components, $\bv=\bv^C+\bv^D$ for $\bv^C\in\Ch$ and $\bv^D\in\Dh$.
At the same time, the EG pressure space is chosen as
\begin{equation*}
    Q_h = \{ q \in \Ltwoz : q|_T \in P_0(T),\ \forall T \in \Th \}.
\end{equation*}
Therefore, the EG method \cite{YiEtAl22-Stokes} is formulated with the pair of the spaces $\Vh\times Q_h$.
\begin{algorithm}[H]
\caption{Enriched Galerkin (\texttt{EG}) method} \label{alg:EG}
Find $( \bu_h, \ph) \in \Vh \times Q_h $ such that
\begin{subequations}\label{sys:EG}
\begin{alignat}{2}
\ba(\buh,\bv)  - \bb(\bv, \ph) &= (\bbf,  \bv), &&\quad \forall \bv \in\Vh, \label{eqn: stand-eg1}\\
\bb(\buh,q) &= 0, &&\quad \forall q\in Q_h,  \label{eqn: stand-eg2}
\end{alignat}
\end{subequations}
where 
\begin{subequations}\label{sys: bilinear_IPDG}
\begin{align}
\ba(\bw,\bv) &:= \nu \big((\nabla \bw,\nabla \bv)_{\Th} -  \langle \avg{\nabla\bw}\cdot \bn_e, \jump{\bv} \rangle_{\Eh} \nonumber\\
&\qquad\qquad\qquad\qquad- \langle \avg {\nabla\bv}\cdot \bn_e,\jump{\bw}\rangle_{\Eh}  +  \rho \langle h_e^{-1}\jump{\bw},
\jump{\bv}\rangle_{\Eh} \big), \label{eqn: bia} \\
\bb(\bw,q)&:= (\nabla\cdot\bw, q)_{\Th} - \langle \jump{\bw}\cdot\bn_e,\avg{q} \rangle_{\Eh}.  \label{eqn: bib}
\end{align}
\end{subequations}
Here, $\rho >0$ is a penalty parameter and $h_e = |e|^{1/(d-1)}$, where $|e|$ is the length/area of the edge/face $e \in \Eh$. 
\end{algorithm}

In the EG method,
the interior penalty discontinuous Galerkin (IPDG) formulation is adopted to weakly impose the continuity of the discontinuous component $\bv^D\in \Dh$, and it requires a sufficiently large penalty parameter $\rho$ to guarantee the well-posedness of the method.


\subsection{Modified enriched Galerkin method with weak derivatives}

We introduce a weak Galerkin (WG) finite element space for velocity \cite{wang2016weak},
\begin{equation*}
    \bm{\mathcal{V}}_h=\{\bm{\upsilon}=\{\bm{\upsilon}_0,\bm{\upsilon}_b\}\mid\bm{\upsilon}_0|_T\in [P_1(T)]^d,\ \forall T\in\Th,\ \bm{\upsilon}_b|_e\in[P_1(e)]^d,\ \forall e\in \Eh\}.
\end{equation*}
Then, the EG velocity $\bv\in\Vh$ can be viewed as a WG function in $\bm{\mathcal{V}}_h$, that is,
\begin{equation*}
    \bm{\upsilon}_0=\bv, \quad \bm{\upsilon}_b=\avg{\bv}\quad\Rightarrow\quad
    \{\bv,\avg{\bv}\}\in\bm{\mathcal{V}}_h,
\end{equation*}
and the weak derivatives for $\bv\in\Vh$ are locally defined as follows.
\begin{definition}
The weak gradient operator \cite{mu2015modified} is defined as $\left.\nabla_w \bm{\upsilon}\right|_T\in[P_0(T)]^{d\times d}$ when $\bm{\upsilon}=\{\bm{\upsilon}_0,\bm{\upsilon}_b\}\in\bm{\mathcal{V}}_h$ satisfying
    \begin{equation*}
        (\nabla_w \bm{\upsilon},\aleph)_T=\langle \bm{\upsilon}_b,\aleph\cdot\bn\rangle_{\partial T},\quad\forall \aleph\in [P_0(T)]^{d\times d}.
    \end{equation*}
In a similar manner, the weak gradient for the EG velocity $\bv\in\Vh$ is defined as $\left.\nabla_w\bv\right|_T\in[P_0(T)]^{d\times d}$ such that
    \begin{equation*}
        (\nabla_w \bv,\aleph)_T=\langle \avg{\bv},\aleph\cdot\bn\rangle_{\partial T},\quad\forall \aleph\in [P_0(T)]^{d\times d}.
    \end{equation*}
Moreover, the weak divergence operator \cite{mu2015modified} for $\bv\in \Vh$ is defined as $\left.\nabla_w\cdot\bv\right|_T\in P_0(T)$ such that
\begin{equation*}
        (\nabla_w \cdot\bv,q)_T=\langle \avg{\bv}\cdot\bn,q\rangle_{\partial T},\quad\forall q\in P_0(T).
    \end{equation*}
\end{definition}
\begin{remark} For any EG velocity function $\bv\in \Vh$, the differences between the weak derivatives and regular derivatives are given as
\begin{subequations}\label{sys: relation}
\begin{alignat}{2}
\left(\nabla\bv-\nabla_w\bv,\aleph\right)_{\Th}&=\langle\jump{\bv},\avg{\aleph}\cdot\bn_e \rangle_{\Eh},&&\quad\forall \aleph\in [P_0(\Th)]^{d\times d},\label{eqn: relation1}\\
\left(\nabla\cdot \bv - \nabla_w\cdot \bv,q\right)_{\Th}&=\langle [\bv]\cdot\bn_e,\avg{q}\rangle_{\Eh}, &&\quad\forall q\in P_0(\Th).\label{eqn: relation2}
\end{alignat}
\end{subequations}
These identities are simply obtained from the definition of the weak derivatives and integration by parts.
Since the EG velocity consists of $\bv^C\in \Ch$ and $\bv^D\in\Dh$, it is clear to see from \eqref{sys: relation} that $\nabla_w\bv^C=\nabla\bv^C$, $\nabla_w\cdot\bv^C=\nabla\cdot\bv^C$, and the jumps of $\bv^D$ on $e\in\Eho$ cause the differences.
In practice, the weak gradient $\nabla_w\bv^D$ is locally determined by
\begin{equation*}
    (\nabla_w\bv^D)_{i,j} = \frac{n_j}{|T|}\langle \avg{\bv^D}, \mathbf{e}_i\rangle_{\partial T},\quad 1\leq i,j\leq d,
\end{equation*}
where $n_j$ is the $j$-th component of $\bn$ and $\mathbf{e}_i$ is the standard unit vector whose $i$-th component is 1.
Since $\bv^D|_T=c(\bx-\bx_T)$ is a linear function, the above line/surface integral can be simply computed by the one-point quadrature rule on each edge/face, respectively.
Also, the weak divergence $\nabla_w\cdot\bv^D$ is the trace of $\nabla_w\bv^D$ from the definition, which implies no associated cost in computing the weak divergence.
\end{remark}

Therefore, we propose the modified enriched Galerkin method which is formulated by the weak derivatives for the EG velocity $\bv\in\Vh$. 
\begin{algorithm}[H]
\caption{Modified enriched Galerkin (\texttt{mEG}) method} \label{alg:mEG}
Find $( \bu_h, \ph) \in \Vh \times Q_h $ such that
\begin{subequations}\label{sys: mEG}
\begin{alignat}{2}
\ba_w(\buh,\bv)  - \bb_w(\bv, \ph) &= (\bbf,  \bv), &&\quad \forall \bv \in\Vh, \label{eqn: mEG1}\\
\bb_w(\buh,q) &= 0, &&\quad \forall q\in Q_h,  \label{eqn: mEG2}
\end{alignat}
\end{subequations}
where
\begin{subequations}\label{sys: bilinear_WG}
\begin{alignat}{2}
\ba_w(\bw,\bv) &:= \nu \big((\nabla_w \bw,\nabla_w \bv)_{\Th}  +  \langle h_e^{-1}\jump{\bw},
\jump{\bv}\rangle_{\Eh} \big), \label{eqn: biaw} \\
\bb_w(\bw,q) &:=(\nabla_w\cdot\bw,q)_{\Th}.  \label{eqn: bibw}
\end{alignat}
\end{subequations}

In this case, $h_e = |e|^{1/(d-1)}$, where $|e|$ is the length/area of the edge/face $e \in \Eh$.
\end{algorithm}

\begin{remark}
There is no penalty parameter in the \texttt{mEG} method, while the \texttt{EG} method in Algorithm~\ref{alg:EG} requires a sufficiently large penalty parameter $\rho$.
We observe the result of applying \eqref{eqn: relation1} to \eqref{eqn: biaw},
\begin{align*}
    \ba_w(\bw,\bv)&= \nu \big((\nabla \bw,\nabla \bv)_{\Th} -  \langle \avg{\nabla\bw} \bn_e, \jump{\bv} \rangle_{\Eh} \nonumber\\
&\qquad\qquad\qquad\qquad- \langle \avg {\nabla_w\bv} \bn_e,\jump{\bw}\rangle_{\Eh}  +  \langle h_e^{-1}\jump{\bw},\jump{\bv}\rangle_{\Eh} \big).
\end{align*}
By comparing with $\ba(\cdot,\cdot)$ in \eqref{eqn: bia}, the average of the gradient in the symmetric term of $\ba(\cdot,\cdot)$ is replaced by that of the weak gradient, and the bilinear form $\ba_w(\cdot,\cdot)$ does not depend on the penalty parameter $\rho$
(see also \cite{xie2020convergence}).
In addition, the identity \eqref{eqn: relation2} implies that for any $\bw\in \Vh$ and $q\in Q_h$,
\begin{equation}\label{b equalto bw}
    \bb_w(\bw,q)=\bb(\bw,q),
\end{equation}
which makes it simple to prove the discrete inf-sup condition.
In practice, this allows us to use the same block matrices corresponding to $\bb(\cdot,\cdot)$ (or $\bb_w(\cdot,\cdot)$) for both \texttt{EG} and 
\texttt{mEG} methods.
\end{remark}

\begin{remark}
For a non-homogeneous Dirichlet boundary condition ($\bu=\mathbf{g}$ on $\partial\Omega$), the EG velocity in \eqref{sys: mEG} satisfies $\bu_h=\bu_h^C+\bu_h^D=\mathbf{g}$ on any $e\in\Ehb$.
We treat $\bu_h^C = \mathbf{g}$ as an essential boundary condition and $\bu_h^D=\mathbf{0}$ as a natural boundary condition.
In the \texttt{mEG} method, the condition $\bu_h^D=\bzero$ is weakly applied to locally compute the weak derivatives in the elements adjoining the boundary. 
\end{remark}


\section{Well-Posedness and Error Analysis}
\label{sec:wellerror}

For the \texttt{EG} method \cite{YiEtAl22-Stokes} in Algorithm~\ref{alg:EG}, the well-posedness and error estimates have been proved in terms of the energy norm in $\Vh$,
\begin{equation*}
    \enorm{\bv} := \left(\norm{\nabla \bv}_{0, \Th}^2 + \rho \norm{h_e^{-1/2}  \jump{\bv}}_{0, \Eh}^2\right)^\half.
\end{equation*}
To show the discrete inf-sup condition and a priori error estimates for the \texttt{mEG} method in Algorithm~\ref{alg:mEG}, we employ the theoretical results of the \texttt{EG} method.
In this case, the \texttt{mEG} method includes the weak derivatives, so it requires a mesh-dependent norm corresponding to the bilinear form $\ba_w(\cdot,\cdot)$,
\begin{equation*}
    \trinorm{\bv}:=\left(\norm{\nabla_w\bv}_{0,\Th}^2 + \norm{h_e^{-1/2}  \jump{\bv}}_{0, \Eh}^2 \right)^\half.
\end{equation*}
Then, the following norm equivalence helps to prove the theoretical results of the \texttt{mEG} method.
\begin{lemma}
For any $\bv\in\Vh$, there are positive constants $\gamma_*$ and $\gamma^*$ independent of $h:=\max_{T\in\Th}h_T$ such that
\begin{equation}
    \gamma_*\trinorm{\bv}\leq\enorm{\bv}\leq\gamma^*\trinorm{\bv}.\label{eqn: normeq}
\end{equation}
\end{lemma}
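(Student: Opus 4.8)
The plan is to compare the two norms term by term. Since the jump contributions $\norm{h_e^{-1/2}\jump{\bv}}_{0,\Eh}$ appearing in $\enorm{\cdot}$ and $\trinorm{\cdot}$ coincide up to the fixed constant $\rho$, it suffices to control $\norm{\nabla\bv}_{0,\Th}$ and $\norm{\nabla_w\bv}_{0,\Th}$ against one another, using the jump term as a bridge. The crucial structural observation is that, because every $\bv\in\Vh$ is affine on each $T$ (both $\bv^C$ and $\bv^D$ are linear), both $\nabla\bv$ and $\nabla_w\bv$ lie in $[P_0(\Th)]^{d\times d}$; hence the identity \eqref{eqn: relation1} may legitimately be tested against either of them.

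First I would establish the auxiliary scaling estimate
\[
\norm{h_e^{1/2}\avg{\aleph}}_{0,\Eh}\le C\,\norm{\aleph}_{0,\Th},\qquad \forall\,\aleph\in[P_0(\Th)]^{d\times d}.
\]
On each face $e\subset\partial T$ the trace inequality \eqref{eqn: trace}, together with $\nabla\aleph|_T=0$, gives $\norm{\aleph}_{0,e}^2\le C h_T^{-1}\norm{\aleph}_{0,T}^2$; multiplying by $h_e$ and invoking shape-regularity (so that $h_e\le C h_T$ and each element has a bounded number of faces) yields the claim, the average on interior faces being handled by splitting into the two adjacent elements.

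Next, for the lower bound $\gamma_*\trinorm{\bv}\le\enorm{\bv}$, I would test \eqref{eqn: relation1} with $\aleph=\nabla_w\bv$ and rearrange to obtain
\[
\norm{\nabla_w\bv}_{0,\Th}^2=(\nabla\bv,\nabla_w\bv)_{\Th}-\langle\jump{\bv},\avg{\nabla_w\bv}\cdot\bne\rangle_{\Eh}.
\]
Applying Cauchy--Schwarz to both terms and the auxiliary estimate to $\avg{\nabla_w\bv}$ produces $\norm{\nabla_w\bv}_{0,\Th}^2\le\big(\norm{\nabla\bv}_{0,\Th}+C\norm{h_e^{-1/2}\jump{\bv}}_{0,\Eh}\big)\norm{\nabla_w\bv}_{0,\Th}$, whence $\norm{\nabla_w\bv}_{0,\Th}\le\norm{\nabla\bv}_{0,\Th}+C\norm{h_e^{-1/2}\jump{\bv}}_{0,\Eh}$. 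Combining with the jump terms and absorbing the fixed factor $\rho$ gives the lower bound. The reverse inequality $\enorm{\bv}\le\gamma^*\trinorm{\bv}$ is entirely symmetric: testing \eqref{eqn: relation1} with $\aleph=\nabla\bv$ yields $\norm{\nabla\bv}_{0,\Th}\le\norm{\nabla_w\bv}_{0,\Th}+C\norm{h_e^{-1/2}\jump{\bv}}_{0,\Eh}$, from which the claim follows in the same way.

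The only genuinely delicate point is the auxiliary scaling estimate --- i.e.\ bounding the face values of the piecewise-constant (weak) gradient by its element $L^2$-norm with the correct $h_e$ weight --- since this is where shape-regularity enters and where one must verify that the constant is independent of $h$. Everything else reduces to the identity \eqref{eqn: relation1}, Cauchy--Schwarz, and absorbing the $h$-independent constant $\rho$ into $\gamma_*$ and $\gamma^*$.
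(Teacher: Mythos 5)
Your proposal is correct and follows essentially the same route as the paper's proof: both test the identity \eqref{eqn: relation1} with $\aleph=\nabla_w\bv$ and then with $\aleph=\nabla\bv$, and conclude via Cauchy--Schwarz and the trace inequality \eqref{eqn: trace}. Your separately stated auxiliary scaling estimate for piecewise-constant tensors is exactly what the paper uses implicitly when it bounds $\norm{h_e^{1/2}\avg{\nabla_w\bv}}_{0,\Eho}$ by $C\norm{\nabla_w\bv}_{0,\Th}$, and your explicit remarks on the piecewise-constancy of both gradients and the absorption of $\rho$ are just careful bookkeeping of points the paper leaves tacit.
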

\begin{proof}
We start with the relation \eqref{eqn: relation1} while choosing $\aleph=\nabla_w\bv$,
\begin{equation*}
    \norm{\nabla_w\bv}_{0,\Th}^2=\left(\nabla_w \bv,\nabla_w\bv\right)_{\Th}=\left(\nabla \bv,\nabla_w\bv\right)_{\Th}-\langle \jump{\bv},\avg{\nabla_w\bv}\cdot\bn_e\rangle_{\Eho}.
\end{equation*}
Then, the first term is simply bounded using the Cauchy-Schwarz inequality,
\begin{equation*}
    \left(\nabla \bv,\nabla_w\bv\right)_{\Th}\leq \norm{\nabla\bv}_{0,\Th}\norm{\nabla_w\bv}_{0,\Th},
\end{equation*}
and the second term is bounded using the Cauchy-Schwarz inequality and trace inequality \eqref{eqn: trace},
\begin{align*}
    \langle \jump{\bv},\avg{\nabla_w\bv}\cdot\bn_e\rangle_{\Eho}&\leq\norm{h_e^{-1/2}\jump{\bv}}_{0,\Eho}\norm{h_e^{1/2}\avg{\nabla_w\bv}}_{0,\Eho}\\
    &\leq C\norm{h_e^{-1/2}\jump{\bv}}_{0,\Eho}\norm{\nabla_w\bv}_{0,\Th}.
\end{align*}
Hence, we arrive at
\begin{equation*}
    \norm{\nabla_w\bv}_{0,\Th}\leq C\enorm{\bv},
\end{equation*}
which implies the lower bound in \eqref{eqn: normeq}.

On the other hand, we choose $\aleph=\nabla\bv$ in \eqref{eqn: relation1} and apply the Cauchy-Schwarz inequality and \eqref{eqn: trace} to obtain
\begin{equation*}
    \norm{\nabla\bv}_{0,\Th}^2\leq\left(\nabla_w\bv,\nabla\bv\right)_{\Th}+\langle \jump{\bv},\avg{\nabla\bv}\cdot\bn_e\rangle_{\Eho}\leq C\trinorm{\bv}\norm{\nabla\bv}_{0,\Th}.
\end{equation*}
Therefore, it is clear to see that
\begin{equation*}
    \norm{\nabla\bv}_{0,\Th}\leq C\trinorm{\bv},
\end{equation*}
which yields the upper bound in \eqref{eqn: normeq}.
\end{proof}


\subsection{Well-posedness}

In this subsection, with the norm equivalence \eqref{eqn: normeq}, we show the well-posedness of the \texttt{mEG} method by proving the essential properties of the bilinear forms.
\begin{lemma}
There exists a positive constant $C$ independent of $h$ such that
\begin{equation}
\sup_{\bv\in \Vh}\frac{\bb_w(\bv,q)}{\trinorm{\bv}}\geq C\norm{q}_0,\quad\forall q\in Q_h.\label{eqn: infsupb}
\end{equation}
\end{lemma}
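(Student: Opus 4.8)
The plan is to inherit the discrete inf-sup condition directly from the standard \texttt{EG} method, exploiting the fact that the divergence bilinear forms coincide. The three ingredients I would combine are: the identity $\bb_w(\bv,q)=\bb(\bv,q)$ from \eqref{b equalto bw}, the inf-sup stability of the pair $\Vh\times Q_h$ under $\bb(\cdot,\cdot)$ already established for the \texttt{EG} method in \cite{YiEtAl22-Stokes}, and the norm equivalence \eqref{eqn: normeq}. No new Fortin-type construction should be needed, since the divergence form is literally unchanged.

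First I would recall the \texttt{EG} inf-sup condition in the energy norm: there is a constant $\tilde C>0$, independent of $h$, such that
\begin{equation*}
\sup_{\bv\in\Vh}\frac{\bb(\bv,q)}{\enorm{\bv}}\geq \tilde C\norm{q}_0,\qquad\forall q\in Q_h.
\end{equation*}
Fixing $q\in Q_h$ with $\norm{q}_0\neq 0$, the supremum on the left is strictly positive, so it suffices to consider test functions $\bv$ with $\bb(\bv,q)>0$. For such $\bv$ the lower bound $\gamma_*\trinorm{\bv}\le\enorm{\bv}$ in \eqref{eqn: normeq} gives $\trinorm{\bv}^{-1}\geq \gamma_*\enorm{\bv}^{-1}$, and hence, using \eqref{b equalto bw},
\begin{equation*}
\frac{\bb_w(\bv,q)}{\trinorm{\bv}}=\frac{\bb(\bv,q)}{\trinorm{\bv}}\geq \gamma_*\,\frac{\bb(\bv,q)}{\enorm{\bv}}.
\end{equation*}

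Then I would take the supremum over all admissible $\bv\in\Vh$ on both sides and invoke the \texttt{EG} inf-sup condition on the right, yielding
\begin{equation*}
\sup_{\bv\in\Vh}\frac{\bb_w(\bv,q)}{\trinorm{\bv}}\geq \gamma_*\,\sup_{\bv\in\Vh}\frac{\bb(\bv,q)}{\enorm{\bv}}\geq \gamma_*\tilde C\,\norm{q}_0,
\end{equation*}
so that \eqref{eqn: infsupb} holds with $C=\gamma_*\tilde C$, which is independent of $h$ by the norm-equivalence lemma and by the $h$-independence of $\tilde C$.

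I expect the only genuine subtlety to be bookkeeping rather than analysis: one must ensure the substitution $\trinorm{\bv}^{-1}\ge\gamma_*\enorm{\bv}^{-1}$ preserves the inequality direction, which is exactly why I restrict to test functions making $\bb(\bv,q)$ positive before passing to suprema (this is legitimate because the \texttt{EG} supremum is itself positive, so the restriction does not lower it). The deeper content — the existence of an operator mapping $\Honezd$ velocities into $\Vh$ that preserves $\bb(\cdot,q)$ against piecewise-constant $q$ and is bounded in $\enorm{\cdot}$ — is entirely absorbed into the cited \texttt{EG} stability result; were one to reprove stability from scratch, constructing that operator and controlling the energy norm of its image would be the main obstacle, but here it is inherited for free.
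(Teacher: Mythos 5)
Your proof is correct and is essentially identical to the paper's: the paper also combines the identity \eqref{b equalto bw}, the \texttt{EG} inf-sup condition from \cite{YiEtAl22-Stokes}, and the lower bound $\gamma_*\trinorm{\bv}\leq\enorm{\bv}$ from \eqref{eqn: normeq} in one chain of inequalities, yielding the constant $C=\gamma_* C_{\mathcal{E}}$. Your extra care about restricting to test functions with $\bb(\bv,q)>0$ before dividing by norms is sound sign bookkeeping that the paper leaves implicit, but it does not change the argument.
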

\begin{proof}
It follows from
\eqref{b equalto bw}, \eqref{eqn: normeq}, and
the discrete inf-sup condition in \cite{YiEtAl22-Stokes}
that 
\begin{equation*}
    \sup_{\bv\in \Vh}\frac{\bb_w(\bv,q)}{\gamma_*\trinorm{\bv}}\geq\sup_{\bv\in \Vh}\frac{\bb(\bv,q)}{\enorm{\bv}}\geq C_{\mathcal{E}}\norm{q}_0,\quad\forall q\in Q_h,
\end{equation*}
where $C_\mathcal{E}$ is the constant for the inf-sup condition with respect to $\enorm{\cdot}$.
\end{proof}

It is also straightforward to show the continuity of $\bb_w(\cdot,\cdot)$ with respect to the norm $\trinorm{\cdot}$ using the norm equivalence \eqref{eqn: normeq}.
\begin{lemma}
For any $\bv\in\Vh$ and $q\in Q_h$, there exists a positive constant $C$ independent of $h$ satisfying
\begin{equation}
    |\bb_w(\bv,q)|\leq C\norm{q}_0\trinorm{\bv}.\label{eqn: contib}
\end{equation}
\end{lemma}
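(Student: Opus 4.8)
The plan is to bound $|\bb_w(\bv,q)|$ by relating the weak divergence back to a quantity controlled by $\trinorm{\bv}$. By definition, $\bb_w(\bv,q) = (\nabla_w\cdot\bv, q)_{\Th}$, so the most direct route is simply the Cauchy--Schwarz inequality in $L^2(\Omega)$:
\begin{equation*}
    |\bb_w(\bv,q)| = |(\nabla_w\cdot\bv, q)_{\Th}| \leq \norm{\nabla_w\cdot\bv}_{0,\Th}\,\norm{q}_0.
\end{equation*}
The remaining task is therefore to control $\norm{\nabla_w\cdot\bv}_{0,\Th}$ by $\trinorm{\bv}$. Since the weak divergence $\nabla_w\cdot\bv$ is (by the Remark) the trace of the weak gradient $\nabla_w\bv$, we have the pointwise/elementwise bound $\norm{\nabla_w\cdot\bv}_{0,\Th}\leq C\norm{\nabla_w\bv}_{0,\Th}$ with $C$ depending only on the dimension $d$. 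By the very definition of the triple norm, $\norm{\nabla_w\bv}_{0,\Th}\leq\trinorm{\bv}$, so chaining these gives $|\bb_w(\bv,q)|\leq C\norm{q}_0\trinorm{\bv}$, which is exactly the claim.

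An alternative that more directly uses the advertised norm equivalence \eqref{eqn: normeq}: invoke the identity \eqref{b equalto bw} to write $\bb_w(\bv,q)=\bb(\bv,q)$, then use the known continuity of $\bb(\cdot,\cdot)$ with respect to $\enorm{\cdot}$ from \cite{YiEtAl22-Stokes}, namely $|\bb(\bv,q)|\leq C\norm{q}_0\enorm{\bv}$, and finally apply the upper bound $\enorm{\bv}\leq\gamma^*\trinorm{\bv}$ from \eqref{eqn: normeq}. This yields
\begin{equation*}
    |\bb_w(\bv,q)| = |\bb(\bv,q)| \leq C\norm{q}_0\enorm{\bv} \leq C\gamma^*\norm{q}_0\trinorm{\bv},
\end{equation*}
and absorbing $\gamma^*$ into the constant finishes the proof. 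This second route is cleaner because it parallels the structure of the preceding inf-sup lemma (which also went through \eqref{b equalto bw} and \eqref{eqn: normeq}), so the reader already has the machinery in hand.

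Honestly, neither route presents a genuine obstacle; this is a one-line estimate once the norm equivalence is available. The only point requiring a small amount of care is the trace-vs-full-gradient comparison in the first approach, i.e. verifying that the scalar $\nabla_w\cdot\bv$ (the trace of the matrix $\nabla_w\bv$) is indeed dominated by the Frobenius norm of $\nabla_w\bv$ with a dimension-dependent constant; this is elementary but should be stated so the constant $C$ is accounted for. I would present the second route as the main argument for consistency with the surrounding exposition, since it reuses \eqref{b equalto bw} and \eqref{eqn: normeq} exactly as the inf-sup proof did.
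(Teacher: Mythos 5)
Your proposal is correct, and the route you recommend as the main argument is exactly the paper's proof: the paper invokes \eqref{b equalto bw}, the continuity of $\bb(\cdot,\cdot)$ from \cite{YiEtAl22-Stokes}, and the upper bound in \eqref{eqn: normeq}, in one line, just as you do. Your first (direct) route is also valid and is genuinely different: since $\nabla_w\cdot\bv$ is the trace of the matrix $\nabla_w\bv$ (take $\aleph = q\,I$ in the definition of the weak gradient), one has $\norm{\nabla_w\cdot\bv}_{0,\Th}\leq \sqrt{d}\,\norm{\nabla_w\bv}_{0,\Th}\leq \sqrt{d}\,\trinorm{\bv}$, so Cauchy--Schwarz gives the claim with the explicit constant $C=\sqrt{d}$. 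That argument buys self-containedness --- it needs neither the norm equivalence \eqref{eqn: normeq} nor the continuity result imported from \cite{YiEtAl22-Stokes} --- at the small cost of checking the trace-versus-Frobenius-norm comparison, which you correctly flag. Either version is acceptable; the paper's choice (your second route) is motivated by uniformity with the inf-sup lemma, where the detour through $\bb(\cdot,\cdot)$ and $\enorm{\cdot}$ is genuinely needed rather than merely convenient.
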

\begin{proof}
It follows from
\eqref{b equalto bw}, \eqref{eqn: normeq}, and
the continuity of $\bb(\cdot,\cdot)$ in \cite{YiEtAl22-Stokes} that
\begin{equation*}
    |\bb_w(\bv,q)|=|\bb(\bv,q)|\leq C\norm{q}_0\enorm{\bv}\leq C\gamma^*\norm{q}_0\trinorm{\bv}.
\end{equation*}
\end{proof}

In addition, we obtain the coercivity and continuity of $\ba_w(\cdot,\cdot)$ with respect to $\trinorm{\cdot}$.
(See \cite{mu2015modified} for details.)

\begin{lemma}
For any $\bv,\bw\in \Vh$, we have the coercivity and continuity results for $\ba_w(\cdot,\cdot)$:
\begin{align}
    &\ba_w(\bv,\bv)=\nu\trinorm{\bv}^2,\label{eqn: coera}\\
    &|\ba_w(\bv,\bw)|\leq \nu\trinorm{\bv}\trinorm{\bw}.\label{eqn: contia}
\end{align}
\end{lemma}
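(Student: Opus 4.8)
The plan is to obtain both results directly from the definition of $\ba_w(\cdot,\cdot)$ in \eqref{eqn: biaw}, since this bilinear form was constructed precisely so that it reproduces the inner product underlying $\trinorm{\cdot}$. No auxiliary estimates beyond the Cauchy--Schwarz inequality will be needed, and in particular the norm equivalence \eqref{eqn: normeq} is not required here.

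For the coercivity \eqref{eqn: coera}, I would simply set $\bw=\bv$ in \eqref{eqn: biaw} and observe that the two resulting terms are exactly the squared summands defining $\trinorm{\bv}$. That is, $(\nabla_w\bv,\nabla_w\bv)_{\Th}=\norm{\nabla_w\bv}_{0,\Th}^2$ and $\langle h_e^{-1}\jump{\bv},\jump{\bv}\rangle_{\Eh}=\norm{h_e^{-1/2}\jump{\bv}}_{0,\Eh}^2$, so their sum is $\trinorm{\bv}^2$ and the prefactor $\nu$ yields the claimed identity. This is an exact equality rather than a one-sided bound, which reflects the fact that $\ba_w$ contains no indefinite trace terms, unlike $\ba$ in \eqref{eqn: bia}.

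For the continuity \eqref{eqn: contia}, I would apply the Cauchy--Schwarz inequality to each of the two terms separately: on the volume term, $(\nabla_w\bv,\nabla_w\bw)_{\Th}\leq\norm{\nabla_w\bv}_{0,\Th}\norm{\nabla_w\bw}_{0,\Th}$, and on the edge/face term, after writing the weight as $h_e^{-1}=h_e^{-1/2}\cdot h_e^{-1/2}$, $\langle h_e^{-1}\jump{\bv},\jump{\bw}\rangle_{\Eh}\leq\norm{h_e^{-1/2}\jump{\bv}}_{0,\Eh}\norm{h_e^{-1/2}\jump{\bw}}_{0,\Eh}$. Then I would combine the two products using the discrete Cauchy--Schwarz inequality $a_1b_1+a_2b_2\leq(a_1^2+a_2^2)^{1/2}(b_1^2+b_2^2)^{1/2}$ applied to the pairs $(\norm{\nabla_w\bv}_{0,\Th},\,\norm{h_e^{-1/2}\jump{\bv}}_{0,\Eh})$ and the analogous pair for $\bw$, which produces exactly $\trinorm{\bv}\trinorm{\bw}$ after restoring the factor $\nu$.

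There is essentially no genuine obstacle here: both statements are immediate consequences of the definitions, and the absence of the symmetric trace terms in $\ba_w$ is precisely what makes the coercivity hold as a clean identity with unit constant, free of any penalty parameter or large-constant restriction. The only minor bookkeeping point is the correct splitting of the weight $h_e^{-1}$ across the two jump factors so that the edge/face norms match those appearing in $\trinorm{\cdot}$; once that is arranged, the continuity follows with constant $\nu$ as stated.
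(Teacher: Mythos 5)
Your proof is correct and is essentially the intended argument: the paper itself does not spell out a proof (it defers to the reference \cite{mu2015modified}), but since $\ba_w(\cdot,\cdot)/\nu$ is exactly the symmetric bilinear form whose diagonal defines $\trinorm{\cdot}$, coercivity is the identity you state and continuity is just Cauchy--Schwarz applied term by term and then to the resulting pairs. Your handling of the weight splitting $h_e^{-1}=h_e^{-1/2}\cdot h_e^{-1/2}$ and the discrete Cauchy--Schwarz step is precisely the standard bookkeeping, so nothing is missing.
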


Thus, we obtain the well-posedness of the \texttt{mEG} method.
\begin{theorem}
There exists a unique solution $(\buh,\ph)\in \Vh\times Q_h$ to the \texttt{mEG} method in Algorithm~\ref{alg:mEG}. 
\end{theorem}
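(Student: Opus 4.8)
The plan is to invoke the standard theory for discrete saddle-point problems, leaning entirely on the coercivity and inf-sup results already established for $\ba_w(\cdot,\cdot)$ and $\bb_w(\cdot,\cdot)$. Since $\Vh\times Q_h$ is finite-dimensional and the system $\eqref{sys: mEG}$ is linear with equally many equations as unknowns, existence of a solution is equivalent to its uniqueness. I would therefore reduce the statement to showing that the homogeneous problem (i.e.\ $\bbf=0$) admits only the trivial solution.

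First I would assume $(\buh,\ph)$ solves the homogeneous system and test $\eqref{eqn: mEG1}$ with $\bv=\buh$ and $\eqref{eqn: mEG2}$ with $q=\ph$. The latter gives $\bb_w(\buh,\ph)=0$, and substituting this into the former together with the coercivity identity $\eqref{eqn: coera}$ yields $\nu\trinorm{\buh}^2=\ba_w(\buh,\buh)=0$, hence $\trinorm{\buh}=0$. Because $\trinorm{\cdot}$ is a genuine norm on $\Vh$ — which follows from the norm equivalence $\eqref{eqn: normeq}$ together with the fact that $\enorm{\cdot}$ separates points (vanishing of $\enorm{\bv}$ forces $\nabla\bv=0$ elementwise and $\jump{\bv}=0$ on every edge, including boundary edges, so that $\bv$ is a global constant vanishing on $\partial\Omega$, whence $\bv\equiv 0$) — this gives $\buh=0$.

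With $\buh=0$, equation $\eqref{eqn: mEG1}$ collapses to $\bb_w(\bv,\ph)=0$ for all $\bv\in\Vh$. Feeding this into the discrete inf-sup condition $\eqref{eqn: infsupb}$ gives $C\norm{\ph}_0\leq \sup_{\bv\in\Vh}\bb_w(\bv,\ph)/\trinorm{\bv}=0$, so $\ph=0$. Thus the homogeneous problem has only the trivial solution, and both uniqueness and existence follow from the finite-dimensional dimension count.

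I do not expect a genuine obstacle here: all the analytic ingredients (coercivity, continuity, and the inf-sup estimate) are already in hand, and the argument is the textbook Brezzi reduction for a coercive-plus-inf-sup saddle-point system. The only point deserving a line of care is confirming that $\trinorm{\cdot}$ is definite on $\Vh$, so that $\trinorm{\buh}=0$ truly forces $\buh=0$; this is precisely where the norm equivalence $\eqref{eqn: normeq}$ earns its keep.
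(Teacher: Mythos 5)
Your proof is correct and follows essentially the same route as the paper's: reduce to the homogeneous system by finite dimensionality, test with $(\buh,\ph)$ to obtain $\ba_w(\buh,\buh)=0$, invoke the coercivity identity \eqref{eqn: coera} to conclude $\buh=\mathbf{0}$, and then use the inf-sup condition \eqref{eqn: infsupb} to force $\ph=0$. Your additional verification that $\trinorm{\cdot}$ is definite on $\Vh$ (via the norm equivalence \eqref{eqn: normeq} and the fact that $\enorm{\cdot}$ separates points) is a detail the paper leaves implicit, but it is added precision within the same argument, not a different approach.
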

\begin{proof}
Since $\Vh$ and $Q_h$ are finite dimensional spaces, it suffices to show that $\bu_h=\mathbf{0}$ and $p_h=0$ when $\bbf=\mathbf{0}$.
If we choose $\bv=\bu_h$ in \eqref{eqn: mEG1} and $q=p_h$ in \eqref{eqn: mEG2} and add the two equations, then we obtain
\begin{equation*}
    \ba_w(\bu_h,\bu_h)=0.
\end{equation*}
Thus, it follows from \eqref{eqn: coera} that $\trinorm{\bu_h}=0$, so $\bu_h=\mathbf{0}$.
Moreover, the fact $\bu_h=\mathbf{0}$ in \eqref{sys: mEG} implies
\begin{equation*}
    \bb_w(\bv,p_h)=0,\quad \forall \bv\in \Vh.
\end{equation*}
Therefore, we have $\norm{p_h}_0=0$ from \eqref{eqn: infsupb}, which gives $p_h=0$.
\end{proof}


\subsection{Error estimates}

We prove error estimates for velocity and pressure with respect to the mesh-dependent norm $\trinorm{\cdot}$ and the $L^2$-norm, respectively.
Let $\Pih: [H^2(\Omega)]^d \to \Vh$ be the interpolation operator \cite{yi2022locking} such that
\begin{equation*}
\Pi_h\bv=\Pi_h^C\bv+\Pi_h^D\bv,
\end{equation*}
where $\Pi_h^C\bv\in \bC_h$
is the nodal value interpolant of $\bv$ and $\Pi_h^D\bv\in \bD_h$ satisfies 
$(\nabla\cdot\Pi_h^D\bv,1)_T=(\nabla\cdot(\bv - \Pi_h^C \bv), 1)_{T}$ for all $T\in \Th$.
The corresponding interpolation error estimates \cite{yi2022locking} are as follows:
\begin{subequations}\label{sys: Pih}
\begin{alignat}{2}
& |\bv - \Pih \bv | _{j,\Th} \leq C h^{m-j} |\bv|_{m},&&\quad 0 \leq j \leq m \leq 2, \quad\forall\bv\in[H^2(\Omega)]^d, \label{eqn: Pih_err} \\
& \enorm{\bv - \Pih \bv} \leq C h \norm{\bv}_2, &&\quad\forall \bv \in [H^2(\Omega)]^d.  \label{eqn: Pih_energy_err}
\end{alignat}
\end{subequations}
We also introduce the local $L^2$-projection $\mathcal{P}_0: \Hone \to Q_h$ satisfying $(q - \Pz q, 1)_T = 0$ for all $T\in\Th$ and its error estimate,
\begin{equation}
  \norm{ q - 
  \Pz q}_0 \leq C h \norm{q}_1,  \quad \forall q \in H^1(\Omega).  \label{eqn: P_err}  
\end{equation}
Furthermore, let us denote $\Theta_h:[H^2(\Omega)]^d\to\bm{\mathcal{V}}_h$ as 
\begin{equation*}
    \Theta_h\bu = \{\Theta_0\bu,\Theta_b\bu\},
\end{equation*}
where $\Theta_0$ and $\Theta_b$ are the local $L^2$-projections onto $[P_1(T)]^d$ for all $T\in\Th$ and $[P_1(e)]^d$ for all $e\in\Eh$, respectively.
Then, we have the following commutative property \cite{wang2016weak},
\begin{equation}
    \nabla_w(\Theta_h\bv) = \bm{\Theta}_h(\nabla\bv),\label{eqn: projprop}
\end{equation}
where $\bm{\Theta}_h$ is the local $L^2$-projection onto $[P_0(T)]^{d\times d}$.

We define error functions used in the error estimates,
\begin{equation}\label{eqn: errfunctions}
    \bm{\chi}_h=\bu-\Pi_h\bu,\quad\mathbf{e}_h=\Pi_h\bu-\bu_h,\quad\xi_h=p-\Pz p,\quad\epsilon_h=\Pz p-p_h.
\end{equation}
Then, we derive the main error equations in the following lemma.
\begin{lemma}\label{lemma: erreqn}
For any $\bv\in \Vh$ and $q\in Q_h$, we have
\begin{subequations}\label{sys: erreqn}
\begin{alignat}{2}
\ba_w(\be_h,\bv)-\bb_w(\bv,\epsilon_h)&=l_1(\bu,\bv)+l_2(\bu,\bv)+\mathbf{s}(\Pi_h\bu,\bv)+\bb_w(\bv,\xi_h),\label{eqn: erreqn1}\\
\bb_w(\be_h,q)&=-\bb_w(\bm{\chi}_h,q),\label{eqn: erreqn2}
\end{alignat}
\end{subequations}
where the supplemental bilinear forms are defined as follows:
\begin{align*}
    &l_1(\bu,\bv)=\nu\sum_{T\in\Th}\langle \nabla\bu\cdot\bn-\bm{\Theta}_h(\nabla\bu)\cdot\bn,\bv-\avg{\bv}\rangle_{\partial T},\\
    &l_2(\bu,\bv)=\nu\left(\nabla_w(\Pi_h\bu-\Theta_h\bu),\nabla_w\bv\right)_{\Th},\\
    &\mathbf{s}(\Pi_h\bu,\bv)=\nu\langle h_e^{-1}\jump{\Pi_h\bu},\jump{\bv}\rangle_{\Eh}.
\end{align*}
\end{lemma}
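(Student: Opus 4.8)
The plan is to obtain both identities by subtracting the exact problem, tested against discrete functions, from the scheme \eqref{sys: mEG}, and then to rewrite the resulting consistency terms with the weak-derivative relations \eqref{sys: relation} and the jump--average identity \eqref{eqn: jump-avg}.

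I would dispose of the divergence identity \eqref{eqn: erreqn2} first, since it is immediate. Choosing $q\in Q_h$ in \eqref{eqn: mEG2} gives $\bb_w(\buh,q)=0$, while the exact velocity satisfies $\bb_w(\bu,q)=\bb(\bu,q)=(\nabla\cdot\bu,q)_{\Th}=0$, because $\bu$ is divergence-free and continuous so its jumps vanish. Writing $\be_h=\Pih\bu-\buh$ and using bilinearity together with $\bb_w(\bu,q)=0$ then yields $\bb_w(\be_h,q)=\bb_w(\Pih\bu,q)=\bb_w(\Pih\bu-\bu,q)=-\bb_w(\bm{\chi}_h,q)$.

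For the momentum identity \eqref{eqn: erreqn1} I would start from $\ba_w(\be_h,\bv)=\ba_w(\Pih\bu,\bv)-\ba_w(\buh,\bv)$, substitute \eqref{eqn: mEG1} to replace $\ba_w(\buh,\bv)=(\bbf,\bv)+\bb_w(\bv,\ph)$, and split $\ph=\Pz p-\epsilon_h$; this isolates $\bb_w(\bv,\epsilon_h)$ on the left and reduces the claim to showing that $\ba_w(\Pih\bu,\bv)-(\bbf,\bv)-\bb_w(\bv,\Pz p)$ equals $l_1+l_2+\mathbf{s}+\bb_w(\bv,\xi_h)$. The penalty part of $\ba_w(\Pih\bu,\bv)$ in \eqref{eqn: biaw} is by definition exactly $\mathbf{s}(\Pih\bu,\bv)$. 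For the weak-gradient part I would write $\Pih\bu=\Theta_h\bu+(\Pih\bu-\Theta_h\bu)$: the second piece contributes precisely $l_2(\bu,\bv)$, and on the first piece the commutative property \eqref{eqn: projprop} gives $\nabla_w\Theta_h\bu=\bm{\Theta}_h(\nabla\bu)$. Since $\bv\in\Vh$ is piecewise linear, both $\nabla\bv$ and $\nabla_w\bv$ are piecewise constant, so the $L^2$-projection $\bm{\Theta}_h$ may be inserted freely inside the inner products against $\nabla_w\bv$, letting me trade $\nabla_w\Theta_h\bu$ for $\nabla\bu$.

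The remaining work is a bookkeeping of face terms. I would test the strong form \eqref{eqn: governing1} against $\bv$, integrate by parts on each $T\in\Th$, and use the continuity of $\nabla\bu$ and $p$ together with \eqref{eqn: jump-avg}. The leftover diffusion fluxes $\nabla\bu\cdot\bn$ on $\partial T$, compared against the projected weak-gradient fluxes $\bm{\Theta}_h(\nabla\bu)\cdot\bn$ produced by the weak-gradient term and converted through relation \eqref{eqn: relation1}, assemble into the consistency term $l_1(\bu,\bv)$; meanwhile the pressure contributions $(p,\nabla\cdot\bv)_{\Th}$, the boundary pressure flux, and $\bb_w(\bv,\Pz p)=\bb(\bv,\Pz p)$ (by \eqref{b equalto bw}) combine into $\bb(\bv,p)-\bb(\bv,\Pz p)=\bb(\bv,\xi_h)$, i.e.\ the pressure-consistency term $\bb_w(\bv,\xi_h)$ on the right-hand side. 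I expect the face-term accounting to be the main obstacle: one must track interior versus boundary faces carefully---where the natural boundary condition on the discontinuous velocity component and the trace of $p$ enter---and repeatedly exploit the piecewise-constancy of $\nabla\bv,\nabla_w\bv,\nabla\cdot\bv$ and the relations \eqref{eqn: relation1}--\eqref{eqn: relation2} to pass between strong and weak derivatives without generating spurious volume terms.
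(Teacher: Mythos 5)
Your proposal is correct and takes essentially the same route as the paper: both prove \eqref{eqn: erreqn2} directly from the continuity and divergence-freeness of $\bu$, and both obtain \eqref{eqn: erreqn1} by testing the momentum equation elementwise, inserting the projection $\bm{\Theta}_h$ against the piecewise-constant (weak) gradients of $\bv$, invoking the commutativity \eqref{eqn: projprop} and the relation \eqref{eqn: relation1} to pass between strong and weak derivatives, splitting $\Pi_h\bu=\Theta_h\bu+(\Pi_h\bu-\Theta_h\bu)$ to produce $l_2$, and collecting the penalty term $\mathbf{s}(\Pi_h\bu,\bv)$ and the pressure-projection term $\bb_w(\bv,\xi_h)$. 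The only difference is cosmetic: you subtract the discrete scheme \eqref{eqn: mEG1} at the outset and then verify the resulting consistency identity, whereas the paper first derives the consistency identity satisfied by $(\bu,p)$ and subtracts the scheme at the very end.
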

\begin{proof}
For any $\bv\in\Vh$, integration by parts and the definition of $\bm{\Theta}_h$ imply
\begin{align*}
    \left(-\Delta\bu,\bv\right)_{\Th} &= -\sum_{T\in \Th}\langle\nabla \bu\cdot\bn,\bv\rangle_{\partial T} + \left(\nabla\bu,\nabla\bv\right)_{\Th}\\
    &=-\sum_{T\in \Th}\langle\nabla \bu\cdot\bn,\bv-\avg{\bv}\rangle_{\partial T}+\left(\bm{\Theta}_h(\nabla\bu),\nabla\bv\right)_{\Th}.
\end{align*}
Then, the definition of the weak gradient and the commutative property \eqref{eqn: projprop} lead to
\begin{align*}
    \left(\bm{\Theta}_h(\nabla\bu),\nabla\bv\right)_{\Th}&= \left(\bm{\Theta}_h(\nabla\bu),\nabla_w\bv\right)_{\Th}+\left(\bm{\Theta}_h(\nabla\bu),\nabla\bv-\nabla_w\bv\right)_{\Th}\\
    &=\left(\nabla_w(\Theta_h\bu),\nabla_w\bv\right)_{\Th}+\sum_{T\in\Th}\langle\bm{\Theta}_h(\nabla \bu)\cdot\bn,\bv-\avg{\bv}\rangle_{\partial T}.
\end{align*}
Hence, we obtain
\begin{align*}
    \left(-\Delta\bu,\bv\right)_{\Th}&=\left(\nabla_w(\Theta_h\bu),\nabla_w\bv\right)_{\Th}-\sum_{T\in\Th}\langle \nabla\bu\cdot\bn-\bm{\Theta}_h(\nabla\bu)\cdot\bn,\bv-\avg{\bv}\rangle_{\partial T},\\
    \left(\nabla p,\bv\right)_{\Th}&=\bb_w(\bv,p),
\end{align*}
where the second equation is obtained by
the trace identity \eqref{eqn: jump-avg}, the continuity of $p$, and \eqref{eqn: relation2}.
Then, by combining the above two equations in the equation \eqref{eqn: governing1}, we have
\begin{equation*}
    \left(\nabla_w(\Theta_h\bu),\nabla_w\bv\right)_{\Th}-\bb_w(\bv,p)=\left(\bbf,\bv\right)+l_1(\bu,\bv).
\end{equation*}
If we add proper terms including $\Pi_h\bu$ to both sides and subtract $\bb_w(\bv,P_0p)$ from both sides, we get
\begin{equation*}
    \ba_w(\Pi_h\bu,\bv)-\bb_w(\bv,\mathcal{P}_0p)=(\bbf,\bv)+l_1(\bu,\bv)+l_2(\bu,\bv)+\mathbf{s}(\Pi_h\bu,\bv)+\bb_w(\bv,\xi_h).
\end{equation*}
By comparing this equation with \eqref{eqn: mEG1} in the \texttt{mEG} method, we arrive at
\begin{equation*}
    \ba_w(\be_h,\bv)-\bb_w(\bv,\epsilon_h)=l_1(\bu,\bv)+l_2(\bu,\bv)+\mathbf{s}(\Pi_h\bu,\bv)+\bb_w(\bv,\xi_h).
\end{equation*}
Furthermore, the continuity of $\bu$ and \eqref{eqn: mEG2} imply
\begin{equation*}
    \left(\nabla\cdot\bu,q\right)_{\Th}=\bb_w(\bu,q)=0=\bb_w(\bu_h,q),
\end{equation*}
so \eqref{eqn: erreqn2} is obtained by subtracting $\bb_w(\Pi_h\bu,q)$ from both sides.
\end{proof}

We provide the upper bounds for the supplementary bilinear forms in Lemma~\ref{lemma: erreqn}.

\begin{lemma}\label{lemma: supplement estimates}
We assume that $\bw\in[H^2(\Omega)]^d$ and $\bv\in \Vh$. Then, we have
\begin{subequations}\label{sys: suppest}
\begin{alignat}{2}
& \left|l_1(\bw,\bv)\right|\leq C\nu h\norm{\bw}_2\trinorm{\bv},\label{eqn: suppest1} \\
& \left|l_2(\bw,\bv)\right|\leq C \nu h\norm{\bw}_2\trinorm{\bv},\label{eqn: suppest2} \\
& \left|\mathbf{s}(\Pi_h\bw,\bv)\right|\leq C\nu h\norm{\bw}_2\trinorm{\bv},\label{eqn: suppest4}
\end{alignat}
\end{subequations}
where the constant $C$ is independent of $h$.
\end{lemma}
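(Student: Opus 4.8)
The plan is to handle all three estimates by the same top-level move: apply Cauchy--Schwarz to split off a factor of $\trinorm{\bv}$ (using that both $\norm{\nabla_w\bv}_{0,\Th}$ and $\norm{h_e^{-1/2}\jump{\bv}}_{0,\Eh}$ are dominated by $\trinorm{\bv}$), so that in each case it remains only to show the complementary factor is $O(h\norm{\bw}_2)$. The three tools for these complementary bounds are the trace inequality \eqref{eqn: trace}, the interpolation estimates \eqref{eqn: Pih_err} (with the analogous $L^2$-projection approximation for $\bm{\Theta}_h$ and $\Theta_b$), and the commutative property \eqref{eqn: projprop}.

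For \eqref{eqn: suppest1} I would work element by element. On $\partial T$ the factor $\bv-\avg{\bv}$ reduces to $\pm\tfrac12\jump{\bv}$ on the interior edges of $T$, so after an element-wise Cauchy--Schwarz the $\bv$-part is $\norm{\bv-\avg{\bv}}_{0,\partial T}$, which reassembles (each interior edge counted twice) into $\norm{h_e^{-1/2}\jump{\bv}}_{0,\Eh}$ once a factor $h_T^{-1/2}$ is supplied. The $\bw$-part $\norm{\nabla\bw-\bm{\Theta}_h(\nabla\bw)}_{0,\partial T}$ is estimated with \eqref{eqn: trace}: since $\bm{\Theta}_h(\nabla\bw)$ is piecewise constant its gradient vanishes, so the trace bound leaves $C\big(h_T^{-1/2}\norm{\nabla\bw-\bm{\Theta}_h(\nabla\bw)}_{0,T}+h_T^{1/2}\snorm{\bw}_{2,T}\big)\le Ch_T^{1/2}\snorm{\bw}_{2,T}$, using the projection error $\norm{\nabla\bw-\bm{\Theta}_h(\nabla\bw)}_{0,T}\le Ch_T\snorm{\bw}_{2,T}$. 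Writing $h_T^{1/2}=h_T\,h_T^{-1/2}$ and bounding $h_T\le h$ produces the extra power of $h$, and a discrete Cauchy--Schwarz over $T\in\Th$ then gives \eqref{eqn: suppest1}.

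The term \eqref{eqn: suppest2} is where I expect the genuine work. After Cauchy--Schwarz it suffices to prove $\norm{\nabla_w(\Pi_h\bw-\Theta_h\bw)}_{0,\Th}\le Ch\norm{\bw}_2$. The obstacle is that $\Pi_h\bw$ enters through its EG reading $\{\Pi_h\bw,\avg{\Pi_h\bw}\}$ while $\Theta_h\bw=\{\Theta_0\bw,\Theta_b\bw\}$ carries a different boundary datum, so the two weak gradients cannot be compared directly through $\nabla\bw$. My plan is to test the piecewise-constant tensor $\nabla_w(\Pi_h\bw-\Theta_h\bw)$ against itself: by the definition of the weak gradient the right-hand side collapses to $\langle\avg{\Pi_h\bw}-\Theta_b\bw,\nabla_w(\Pi_h\bw-\Theta_h\bw)\cdot\bn\rangle_{\partial T}$. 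Since the test tensor is constant on $T$, the inverse-type bound $\norm{\nabla_w(\cdot)}_{0,\partial T}\le Ch_T^{-1/2}\norm{\nabla_w(\cdot)}_{0,T}$ cancels one factor and yields $\norm{\nabla_w(\Pi_h\bw-\Theta_h\bw)}_{0,T}\le Ch_T^{-1/2}\norm{\avg{\Pi_h\bw}-\Theta_b\bw}_{0,\partial T}$. It then remains to control the boundary discrepancy: adding and subtracting $\bw$ (single-valued, so $\avg{\bw}=\bw$) and invoking \eqref{eqn: trace} together with \eqref{eqn: Pih_err} and the $L^2(e)$-approximation of $\Theta_b$ gives $\norm{\avg{\Pi_h\bw}-\Theta_b\bw}_{0,\partial T}\le Ch_T^{3/2}\norm{\bw}_{2,\omega_T}$ on a local patch, whence $\norm{\nabla_w(\Pi_h\bw-\Theta_h\bw)}_{0,T}\le Ch_T\norm{\bw}_{2,\omega_T}$ and the claim follows by summation over $\Th$.

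Finally, \eqref{eqn: suppest4} is the most routine. After Cauchy--Schwarz the $\bv$-factor is again $\norm{h_e^{-1/2}\jump{\bv}}_{0,\Eh}\le\trinorm{\bv}$, and since $\bw$ is continuous (and vanishes on $\partial\Omega$) we have $\jump{\bw}=0$ on all of $\Eh$, so $\jump{\Pi_h\bw}=\jump{\Pi_h\bw-\bw}$. Applying \eqref{eqn: trace} edge by edge and the interpolation bounds \eqref{eqn: Pih_err} yields $\norm{h_e^{-1/2}\jump{\Pi_h\bw}}_{0,\Eh}\le Ch\norm{\bw}_2$, which is \eqref{eqn: suppest4}. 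Thus the only delicate estimate is \eqref{eqn: suppest2}, and the crux there is precisely the reduction, via testing the constant weak gradient against itself, to the boundary quantity $\avg{\Pi_h\bw}-\Theta_b\bw$.
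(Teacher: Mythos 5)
Your proposal is correct, and on the two substantive estimates it diverges from the paper in instructive ways. For \eqref{eqn: suppest1} the paper does not give an argument at all but cites \cite{mu2015modified}, whereas you supply the standard proof (splitting $\bv-\avg{\bv}=\pm\tfrac12\jump{\bv}$ on interior faces, trace inequality plus the piecewise-constant projection error for the $\bw$-factor, reassembly over edges); that is essentially what the cited proof amounts to. The genuine difference is in \eqref{eqn: suppest2}. The paper never estimates $\norm{\nabla_w(\Pi_h\bw-\Theta_h\bw)}_{0,\Th}$ as a standalone quantity: it unfolds $l_2(\bw,\bv)$ via the weak-gradient definition into $\sum_{T\in\Th}\langle\avg{\Pi_h\bw}-\Theta_b\bw,\nabla_w\bv\cdot\bn\rangle_{\partial T}$ and then exploits that $\nabla_w\bv\cdot\bn$ is constant on each face, so the $L^2(e)$-orthogonality of the projection $\Theta_b$ replaces $\Theta_b\bw$ by $\bw$ \emph{exactly}; only the interpolation error $\avg{\Pi_h\bw-\bw}$ survives, and Cauchy--Schwarz with the trace inequality finishes. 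You instead apply Cauchy--Schwarz first and then bound the weak-gradient discrepancy by testing it against itself, paying with an inverse-type trace inequality for constants and an approximation estimate for $\Theta_b$ on faces. Both routes are valid; the paper's is shorter because the $\bw-\Theta_b\bw$ contribution vanishes identically rather than having to be estimated, while yours buys the stronger intermediate bound $\norm{\nabla_w(\Pi_h\bw-\Theta_h\bw)}_{0,\Th}\leq Ch\norm{\bw}_2$, which would remain usable even if the test function's weak gradient were not facewise of low polynomial degree. Your treatment of \eqref{eqn: suppest4} coincides with the paper's; your parenthetical remark that $\bw$ must vanish on $\partial\Omega$ for $\jump{\bw}=0$ to hold on boundary faces is in fact a point the paper glosses over (the lemma assumes only $\bw\in[H^2(\Omega)]^d$, but is applied to $\bu\in[H_0^1(\Omega)\cap H^2(\Omega)]^d$, where the identity is legitimate).
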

\begin{proof}
The proof of the bound \eqref{eqn: suppest1} can be found in \cite{mu2015modified}, so we focus on showing \eqref{eqn: suppest2} and \eqref{eqn: suppest4} here.
The definition of the weak gradient and the properties of the projections $\Pi_h$ and $\Theta_h$ lead to
\begin{align*}
    |l_2(\bw,\bv)|&=\nu\left|\left(\nabla_w(\Pi_h\bw-\Theta_h\bw),\nabla_w\bv)\right)_{\Th}\right|\\
    &= \nu \left| \sum_{T\in \Th}\langle \avg{\Pi_h\bw}-\Theta_b\bw,\nabla_w\bv\cdot\bn\rangle_{\partial T}\right|\\
    &= \nu \left| \sum_{T\in \Th}\langle \avg{\Pi_h\bw-\bw},\nabla_w\bv\cdot\bn\rangle_{\partial T}\right|\\
    &\leq \nu \sum_{T\in\Th}\norm{h^{-1/2}_T\avg{\Pi_h\bw-\bw}}_{\partial T}\norm{h^{1/2}_T\nabla_w\bv}_{\partial T}\\
    &\leq C\nu h\norm{\bw}_2 \norm{\nabla_w\bv}_{0,\Th}
\end{align*}
The third identity holds true because $\bw\in [H^2(\Omega)]^d$ is continuous on $\partial T$, and the last inequality is obtained from the trace inequality \eqref{eqn: trace} and \eqref{eqn: Pih_err}.

For the stabilization term \eqref{eqn: suppest4}, it follows from the Cauchy-Schwarz inequality, \eqref{eqn: trace}, and \eqref{eqn: Pih_err} that
\begin{align*}
    |\mathbf{s}(\Pi_h\bw,\bv)|&=\nu\left|\langle h_e^{-1}\jump{\Pi_h\bw-\bw},\jump{\bv}\rangle_{\Eh}\right|\\
    &\leq C\nu\norm{h_e^{-1/2}\jump{\Pi_h\bw-\bw}}_{0,\Eh}\norm{h_e^{-1/2}\jump{\bv}}_{0,\Eh}\\
    &\leq C\nu h\norm{\bw}_2\trinorm{\bv}.
\end{align*}
\end{proof}

Consequently, we obtain the following error estimates.
\begin{theorem}\label{thm: erroresti}
Let $(\bu,p)\in [H_0^1(\Omega)\cap H^2(\Omega)]^d\times (L_0^2(\Omega)\cap H^1(\Omega))$ be the solution to \eqref{eqn: governing1}-\eqref{eqn: governing3}, and $(\buh,p_h)\in \Vh\times Q_h$ be the discrete solution from the \texttt{mEG} method. Then, we have the following error estimates
  \begin{align*}
      &\trinorm{\Pi_h\bu-\bu_h}\leq Ch\left(\norm{\bu}_2+\frac{1}{\nu}\norm{p}_1\right),\\
      &\norm{\Pz p-p_h}_0\leq Ch\left(\nu\norm{\bu}_2+\norm{p}_1\right).
  \end{align*}
\end{theorem}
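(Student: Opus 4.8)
The plan is to follow the standard two-step saddle-point argument, using the error equations of Lemma~\ref{lemma: erreqn} together with the coercivity/continuity of $\ba_w(\cdot,\cdot)$ in \eqref{eqn: coera}--\eqref{eqn: contia}, the continuity \eqref{eqn: contib} and inf-sup stability \eqref{eqn: infsupb} of $\bb_w(\cdot,\cdot)$, and the supplementary bounds of Lemma~\ref{lemma: supplement estimates}. Note that the two quantities to be estimated are the \emph{discrete} errors $\mathbf{e}_h=\Pih\bu-\buh$ and $\epsilon_h=\Pz p-p_h$, so no further triangle inequality with interpolation/projection errors is needed; the estimates fall out once $\trinorm{\mathbf{e}_h}$ and $\norm{\epsilon_h}_0$ are bounded.

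For the velocity I would test \eqref{eqn: erreqn1} with $\bv=\mathbf{e}_h$ and \eqref{eqn: erreqn2} with $q=\epsilon_h$. Coercivity \eqref{eqn: coera} turns the left side into $\nu\trinorm{\mathbf{e}_h}^2$, while \eqref{eqn: erreqn2} replaces $\bb_w(\mathbf{e}_h,\epsilon_h)$ by $-\bb_w(\bm{\chi}_h,\epsilon_h)$, yielding
\[
\nu\trinorm{\mathbf{e}_h}^2 = l_1(\bu,\mathbf{e}_h)+l_2(\bu,\mathbf{e}_h)+\mathbf{s}(\Pih\bu,\mathbf{e}_h)+\bb_w(\mathbf{e}_h,\xi_h)-\bb_w(\bm{\chi}_h,\epsilon_h).
\]
The three supplementary forms are each bounded by $C\nu h\norm{\bu}_2\trinorm{\mathbf{e}_h}$ via Lemma~\ref{lemma: supplement estimates}, and $\bb_w(\mathbf{e}_h,\xi_h)\le C\norm{\xi_h}_0\trinorm{\mathbf{e}_h}\le Ch\norm{p}_1\trinorm{\mathbf{e}_h}$ using \eqref{eqn: contib} and \eqref{eqn: P_err}. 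The remaining term $\bb_w(\bm{\chi}_h,\epsilon_h)$ is the genuinely new feature: since the EG interpolant $\Pih$ matches the divergence only element-wise and not the edge fluxes, $\bb_w(\bm{\chi}_h,q)$ does not vanish. I would control it through an auxiliary estimate $|\bb_w(\bm{\chi}_h,q)|\le Ch\norm{\bu}_2\norm{q}_0$, which follows from the smallness of the enrichment part $\Pih^D\bu$ (of order $h^2$ in $L^2$ and $h$ in $H^1$ on each element), the trace inequality \eqref{eqn: trace}, and \eqref{eqn: Pih_err}.

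For the pressure I would use the inf-sup condition \eqref{eqn: infsupb}: from \eqref{eqn: erreqn1} we have $\bb_w(\bv,\epsilon_h)=\ba_w(\mathbf{e}_h,\bv)-l_1(\bu,\bv)-l_2(\bu,\bv)-\mathbf{s}(\Pih\bu,\bv)-\bb_w(\bv,\xi_h)$, and bounding the right side with \eqref{eqn: contia}, Lemma~\ref{lemma: supplement estimates}, \eqref{eqn: contib}, and \eqref{eqn: P_err} gives $\norm{\epsilon_h}_0\le C(\nu\trinorm{\mathbf{e}_h}+\nu h\norm{\bu}_2+h\norm{p}_1)$. Substituting this into the velocity identity converts the coupling term $\bb_w(\bm{\chi}_h,\epsilon_h)$ into $C\nu h\norm{\bu}_2\trinorm{\mathbf{e}_h}$ plus pure data terms; Young's inequality then absorbs all $\trinorm{\mathbf{e}_h}$-linear contributions into $\tfrac{\nu}{2}\trinorm{\mathbf{e}_h}^2$, producing $\trinorm{\mathbf{e}_h}\le Ch(\norm{\bu}_2+\tfrac1\nu\norm{p}_1)$. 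Feeding this velocity bound back into the pressure inequality gives $\norm{\epsilon_h}_0\le Ch(\nu\norm{\bu}_2+\norm{p}_1)$.

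The main obstacle is the velocity-pressure coupling created by $\bb_w(\bm{\chi}_h,\epsilon_h)$, which is absent in divergence-conforming discretizations but present here. The crux is twofold: first, establishing the auxiliary bound $|\bb_w(\bm{\chi}_h,q)|\le Ch\norm{\bu}_2\norm{q}_0$ so that this term carries an extra factor of $h$ and can be decoupled; and second, applying Young's inequality with the correct weights so that the $\nu$-powers are tracked exactly, giving the $\tfrac1\nu$ factor in the velocity bound and the $\nu$ factor in the pressure bound. Were the interpolant to satisfy $\bb_w(\bm{\chi}_h,q)=0$, the coupling term would drop and the argument would simplify, but that is not the case for the present EG interpolation.
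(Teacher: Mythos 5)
Your proposal follows essentially the same route as the paper's proof: the intermediate pressure bound $\norm{\epsilon_h}_0\leq C(\nu\trinorm{\be_h}+\nu h\norm{\bu}_2+h\norm{p}_1)$ via the inf-sup condition, testing the error equations with $\bv=\be_h$ and $q=\epsilon_h$, the auxiliary coupling estimate $|\bb_w(\bm{\chi}_h,\epsilon_h)|\leq Ch\norm{\bu}_2\norm{\epsilon_h}_0$, and Young's inequality with $\nu$-weighted parameters before feeding the velocity bound back into the pressure estimate. The only cosmetic difference is that the paper establishes the coupling bound by first reducing $\bb_w(\bm{\chi}_h,\epsilon_h)$ to the single edge term $\langle\avg{\bm{\chi}_h}\cdot\bn_e,\jump{\epsilon_h}\rangle_{\Eho}$ via integration by parts and the trace identity \eqref{eqn: jump-avg}, whereas you bound the volume and jump contributions of $\bb(\bm{\chi}_h,\epsilon_h)$ directly through the smallness of the enrichment part; both yield the same $O(h)$ factor and the identical final estimates.
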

\begin{proof}
First, we see the error equation \eqref{eqn: erreqn1}, for any $\bv\in\Vh$ and $q\in Q_h$,
\begin{equation*}
    \bb_w(\bv,\epsilon_h)=\ba_w(\be_h,\bv)-l_1(\bu,\bv)-l_2(\bu,\bv)-\mathbf{s}(\Pi_h\bu,\bv)-\bb_w(\bv,\xi_h).
\end{equation*}
Then, it follows from \eqref{eqn: contia}, \eqref{sys: suppest}, \eqref{eqn: contib}, and \eqref{eqn: P_err} that
\begin{align*}
      |\bb_w(\bv,\epsilon_h)|&\leq C\left(\nu\trinorm{\be_h}\trinorm{\bv}+\nu h\norm{\bu}_2\trinorm{\bv}+\norm{\xi_h}_0\trinorm{\bv}\right)\\
      &\leq C\left(\nu\trinorm{\be_h}+\nu h\norm{\bu}_2+h\norm{p}_1\right)\trinorm{\bv}.
  \end{align*}
The inf-sup condition \eqref{eqn: infsupb} implies that
\begin{equation}
    \norm{\epsilon_h}_0\leq C\left(\nu\trinorm{\be_h}+h(\nu\norm{\bu}_2+\norm{p}_1)\right).\label{eqn: interest}
\end{equation}
Moreover, by choosing $\bv=\be_h$ and $q=\epsilon_h$ in \eqref{sys: erreqn} and substituting \eqref{eqn: erreqn2} into \eqref{eqn: erreqn1}, we obtain
\begin{equation*}
    \ba_w(\be_h,\be_h)=-\bb_w(\bm{\chi}_h,\epsilon_h)+l_1(\bu,\be_h)+l_2(\bu,\be_h)+\mathbf{s}(\Pi_h\bu,\be_h)+\bb_w(\be_h,\xi_h).
\end{equation*}
Here, we show an upper bound for the term $\bb_w(\bm{\chi}_h,\epsilon_h)$.
Integration by parts and the trace identity \eqref{eqn: jump-avg} give 
\begin{align*}
    \bb_w(\bm{\chi}_h,\epsilon_h)&=\bb(\bm{\chi}_h,\epsilon_h)\\
    &=\left(\nabla\cdot\bm{\chi}_h,\epsilon_h\right)_{\Th}-\langle\jump{\bm{\chi}_h}\cdot\bn_e,\avg{\epsilon_h}\rangle_{\Eh}\\
    &=\sum_{T\in\Th}\langle \bm{\chi}_h\cdot\bn,\epsilon_h\rangle_{\partial T}-\langle\jump{\bm{\chi}_h}\cdot\bn_e,\avg{\epsilon_h}\rangle_{\Eh}\\
    &=\langle\avg{\bm{\chi}_h}\cdot\bn_e,\jump{\epsilon_h}\rangle_{\Eho}.
\end{align*}
Thus, it follows from the Cauchy-Schwarz inequality, \eqref{eqn: trace}, and \eqref{eqn: Pih_err} that
\begin{align}\label{intermediate result 2}
    |\bb_w(\bm{\chi}_h,\epsilon_h)|&\leq \norm{\avg{\bm{\chi}_h}}_{0,\Eh}\norm{\jump{\epsilon_h}}_{0,\Eh}
    \leq Ch\norm{\bu}_2\norm{\epsilon_h}_0.
\end{align}
Hence, by \eqref{eqn: coera}, \eqref{sys: suppest}, \eqref{eqn: contib}, \eqref{eqn: P_err}, \eqref{eqn: interest}, and \eqref{intermediate result 2}, we have
\begin{equation*}
    \nu\trinorm{\be_h}^2\leq C\left(\nu h\norm{\bu}_2\trinorm{\be_h}+h\norm{p}_1\trinorm{\be_h}+\nu h^2\norm{\bu}_2^2+h^2\norm{\bu}_2\norm{p}_1\right).
\end{equation*}
We also apply the Young's inequality with a positive constant $\kappa$ satisfying $\kappa<1/C$,
\begin{align*}
    &\nu h\norm{\bu}_2\trinorm{\be_h}\leq \nu\left(\frac{h^2}{2\kappa}\norm{\bu}_2^2+\frac{\kappa}{2}\trinorm{\be_h}^2\right),\\
    &h\norm{p}_1\trinorm{\be_h}\leq  \left(\frac{h^2}{2\nu\kappa}\norm{p}_1^2+\frac{\nu\kappa}{2}\trinorm{\be_h}^2\right),\\
    &h^2\norm{\bu}_2\norm{p}_1\leq\left(\frac{\nu h^2}{2}\norm{\bu}^2_2+\frac{h^2}{2\nu}\norm{p}_1^2\right).
\end{align*}
We finally obtain
\begin{equation*}
\nu\trinorm{\be_h}^2\leq C\left(\nu h^2\norm{\bu}_2^2+\frac{h^2}{\nu}\norm{p}_1^2\right),
\end{equation*}
which implies that
\begin{equation*}
    \trinorm{\be_h}\leq Ch\left(\norm{\bu}_2+\frac{1}{\nu}\norm{p}_1\right).
\end{equation*}
In addition, together with this velocity error estimate, 
the estimate \eqref{eqn: interest} implies
\begin{equation*}
    \quad\norm{\epsilon_h}_0\leq Ch\left(\nu\norm{\bu}_2+\norm{p}_1\right).
\end{equation*}
\end{proof}

Finally, we present the total error estimates showing the optimal rates of convergence in both velocity and pressure.

\begin{theorem}
Under the same assumption of Theorem~\ref{thm: erroresti}, we have the following error estimates
  \begin{align*}
      &\trinorm{\bu-\bu_h}\leq Ch\left(\norm{\bu}_2+\frac{1}{\nu}\norm{p}_1\right),\\
      &\norm{p-p_h}_0\leq Ch\left(\nu\norm{\bu}_2+\norm{p}_1\right).
  \end{align*}
\end{theorem}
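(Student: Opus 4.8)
The plan is to obtain both bounds from the triangle inequality, splitting each error into an interpolation (or projection) part and the discrete error already controlled in Theorem~\ref{thm: erroresti}. Concretely, I would write $\bu-\bu_h=(\bu-\Pi_h\bu)+(\Pi_h\bu-\bu_h)$ and $p-p_h=(p-\Pz p)+(\Pz p-p_h)$, so that
\[
\trinorm{\bu-\bu_h}\le\trinorm{\bu-\Pi_h\bu}+\trinorm{\Pi_h\bu-\bu_h},\qquad
\norm{p-p_h}_0\le\norm{p-\Pz p}_0+\norm{\Pz p-p_h}_0 .
\]
In each inequality the second summand is exactly the quantity estimated in Theorem~\ref{thm: erroresti}, so it remains only to bound the interpolation and projection errors by $Ch\norm{\bu}_2$ and $Ch\norm{p}_1$, respectively.

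The pressure estimate is immediate: the projection error $\norm{p-\Pz p}_0\le Ch\norm{p}_1$ is \eqref{eqn: P_err}, and adding the bound for $\norm{\Pz p-p_h}_0$ from Theorem~\ref{thm: erroresti} and absorbing the extra $Ch\norm{p}_1$ into the right-hand side yields $\norm{p-p_h}_0\le Ch(\nu\norm{\bu}_2+\norm{p}_1)$.

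For the velocity it remains to show $\trinorm{\bu-\Pi_h\bu}\le Ch\norm{\bu}_2$, which I would treat termwise in the definition of $\trinorm{\cdot}$. Since $\bu\in\Honezd$ is single-valued across interior edges and vanishes on $\partial\Omega$, we have $\jump{\bu}=\mathbf{0}$, hence $\jump{\bu-\Pi_h\bu}=\jump{\Pi_h\bu-\bu}$, and the penalty term $\norm{h_e^{-1/2}\jump{\bu-\Pi_h\bu}}_{0,\Eh}\le Ch\norm{\bu}_2$ then follows from the trace inequality \eqref{eqn: trace} together with \eqref{eqn: Pih_err}. For the weak-gradient term I would first observe that, because $\bu$ is continuous, testing the defining relation of $\nabla_w$ with piecewise-constant tensors and integrating by parts gives $\nabla_w\bu=\bm{\Theta}_h(\nabla\bu)$, the $L^2$-projection of $\nabla\bu$ onto $[P_0(\Th)]^{d\times d}$. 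Writing
\[
\nabla_w(\bu-\Pi_h\bu)=\bm{\Theta}_h(\nabla\bu-\nabla\Pi_h\bu)+(\nabla\Pi_h\bu-\nabla_w\Pi_h\bu),
\]
the first term is bounded by $\norm{\nabla(\bu-\Pi_h\bu)}_{0,\Th}\le Ch\norm{\bu}_2$ via \eqref{eqn: Pih_err} (using that $\nabla\Pi_h\bu$ is piecewise constant), and the second is controlled by $C\norm{h_e^{-1/2}\jump{\Pi_h\bu}}_{0,\Eh}$ upon choosing $\aleph=\nabla\Pi_h\bu-\nabla_w\Pi_h\bu$ in \eqref{eqn: relation1} and applying \eqref{eqn: trace}, which is again $\le Ch\norm{\bu}_2$ by the jump estimate above. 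Collecting these gives $\trinorm{\bu-\Pi_h\bu}\le Ch\norm{\bu}_2$, and adding the bound from Theorem~\ref{thm: erroresti} finishes the velocity estimate.

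The only genuine subtlety, and the step I expect to be the main obstacle, is the weak-gradient interpolation term: because $\bu-\Pi_h\bu\notin\Vh$, the norm equivalence \eqref{eqn: normeq} does not apply directly, so one must instead identify $\nabla_w\bu$ with $\bm{\Theta}_h(\nabla\bu)$ and invoke the gradient/weak-gradient relation \eqref{eqn: relation1} on the discrete part $\Pi_h\bu$. Everything else reduces to a routine combination of the triangle inequality, the interpolation and projection bounds \eqref{eqn: Pih_err} and \eqref{eqn: P_err}, and the already-established discrete estimates of Theorem~\ref{thm: erroresti}.
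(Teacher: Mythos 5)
Your proposal is correct, and its skeleton (triangle inequality, Theorem~\ref{thm: erroresti} for the discrete parts, \eqref{eqn: P_err} for the pressure projection) is exactly the paper's. Where you genuinely diverge is the velocity interpolation term: the paper disposes of $\trinorm{\bu-\Pih\bu}$ in one stroke by citing the energy-norm interpolation estimate \eqref{eqn: Pih_energy_err} together with the norm equivalence \eqref{eqn: normeq}, whereas you bound it directly and termwise, identifying $\nabla_w\bu=\bm{\Theta}_h(\nabla\bu)$, splitting off $\nabla\Pih\bu-\nabla_w\Pih\bu$ via \eqref{eqn: relation1} with $\aleph=\nabla\Pih\bu-\nabla_w\Pih\bu$, and reducing everything to the jump bound $\norm{h_e^{-1/2}\jump{\Pih\bu-\bu}}_{0,\Eh}\leq Ch\norm{\bu}_2$ from \eqref{eqn: trace} and \eqref{eqn: Pih_err}. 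The subtlety you flag is real: \eqref{eqn: normeq} is stated and proved only for $\bv\in\Vh$, while $\bu-\Pih\bu\notin\Vh$, so the paper's citation implicitly extends the equivalence (or at least its lower-bound half) to piecewise-$H^1$ functions; this extension is harmless, since the proof of \eqref{eqn: normeq} only uses \eqref{eqn: relation1} and \eqref{eqn: trace}, both of which hold verbatim once the weak gradient is defined for such functions, but the paper does not say so. In short, the paper's route buys brevity by leaning on the cited estimate \eqref{eqn: Pih_energy_err} and an unstated extension of the equivalence; yours buys a self-contained argument that stays entirely inside the identities proved in the paper, at the cost of carrying out the computation that the reference absorbs.
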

\begin{proof}
The estimates in this theorem are readily proved by the triangle inequality, the interpolation error estimates \eqref{eqn: Pih_energy_err} and \eqref{eqn: P_err}, and the norm equivalence \eqref{eqn: normeq}.
\end{proof}


\section{A Pressure-Robust Modified Enriched Galerkin Method}\label{sec:PRmEG}

In this section, we derive a pressure-robust scheme associated with the \texttt{mEG} method (Algorithm~\ref{alg:mEG}) by applying the velocity reconstruction operator \cite{HuLeeMuYi} to the load vector on the right hand side.
The operator $\cR: \Vh \to  \mathcal{B}DM_1(\Th)\subset H(\text{div},\Omega)$ is defined by
\begin{subequations}\label{sys: BDM}
\begin{alignat}{2}
\int_e (\cR \bv) \cdot\bn_e  p_1\  ds & = \int_e \avg{\bv}\cdot\bn_e p_1 \ ds,
 && \quad \forall p_1 \in P_1(e), \ \forall e \in \Eho,  \\
\int_e (\cR \bv) \cdot\bn_e  p_1\  ds & = 0,  && \quad \forall p_1 \in P_1(e), \ \forall e \in \Ehb,
\end{alignat}
\end{subequations}
when $\mathcal{B}DM_1(\Th)$ denotes the Brezzi-Douglas-Marini space of index 1 on $\Th$.

\begin{algorithm}[H]
\caption{Pressure-robust modified enriched Galerkin (\texttt{PR-mEG}) method}
\label{alg:PR-mEG}
Find $( \bu_h, \ph) \in \Vh \times Q_h $ such that
\begin{subequations}\label{sys:PR-mEG}
\begin{alignat}{2}
\ba_w(\buh,\bv)  - \bb_w(\bv, \ph) &= (\bbf, \cR \bv)_{\Th}, &&\quad \forall \bv \in\Vh, \label{eqn: prmeg1}\\
\bb_w(\buh,q) &= 0, &&\quad \forall q\in Q_h,  \label{eqn: prmeg2}
\end{alignat}
\end{subequations}
where $\ba_w(\cdot, \cdot)$ and $\bb_w(\cdot, \cdot)$ are the same as \eqref{eqn: biaw} and \eqref{eqn: bibw}, respectively.
\end{algorithm}

\begin{remark}
The \texttt{mEG} method in Algorithm~\ref{alg:mEG} and \texttt{PR-mEG} method in Algorithm~\ref{alg:PR-mEG} have the same formulation on left hand side that consists of $\ba_w(\cdot,\cdot)$ and $\bb_w(\cdot,\cdot)$.
The only difference is that a reconstructed test function is applied to the load vector on the right hand side.
This implies that the well-posedness of the \texttt{PR-mEG} method is guaranteed by that of the \texttt{mEG} method, and moreover, both of the \texttt{mEG} and \texttt{PR-mEG} methods produce
the same stiffness matrix.
\end{remark}

The error equations corresponding to the \texttt{PR-mEG} method are derived in the following lemma using the same error functions in \eqref{eqn: errfunctions}.

\begin{lemma}
For any $\bv\in\Vh$ and $q\in Q_h$, we have
\begin{subequations}\label{erreqnpr}
\begin{alignat}{2}
\ba_w(\be_h,\bv)-\bb_w(\bv,\epsilon_h)&=l_1(\bu,\bv)+l_2(\bu,\bv)+l_3(\bu,\bv)+\mathbf{s}(\Pi_h\bu,\bv),\label{eqn: erreqnpr1}\\
\bb_w(\be_h,q)&=-\bb_w(\bm{\chi}_h,q),\label{eqn: erreqnpr2}
\end{alignat}
\end{subequations}
where $l_1(\cdot,\cdot)$, $l_2(\cdot,\cdot)$, and $\mathbf{s}(\cdot,\cdot)$ are defined in Lemma~\ref{lemma: erreqn}, and another supplemental bilinear form is defined by
\begin{equation*}
    l_3(\bu,\bv)=-\nu\left(\Delta\bu,\bv-\cR\bv\right)_{\Th}
\end{equation*}
\end{lemma}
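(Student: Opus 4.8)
The plan is to follow the derivation of Lemma~\ref{lemma: erreqn} almost verbatim, the one genuinely new point being that the load is now tested against the reconstruction $\cR\bv$ rather than $\bv$. Accordingly, I would begin from the strong form \eqref{eqn: governing1} paired with $\cR\bv$,
\begin{equation*}
(\bbf,\cR\bv)_{\Th}=\nu(-\Delta\bu,\cR\bv)_{\Th}+(\nabla p,\cR\bv)_{\Th},
\end{equation*}
and treat the two terms on the right separately, aiming to reproduce the consistency identity
\begin{equation*}
\ba_w(\Pi_h\bu,\bv)-\bb_w(\bv,\Pz p)=(\bbf,\cR\bv)_{\Th}+l_1(\bu,\bv)+l_2(\bu,\bv)+l_3(\bu,\bv)+\mathbf{s}(\Pi_h\bu,\bv),
\end{equation*}
from which \eqref{eqn: erreqnpr1} follows at once by subtracting \eqref{eqn: prmeg1} and invoking the error functions in \eqref{eqn: errfunctions}.

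The pressure term is the crux and is exactly where pressure-robustness enters. Since $\cR\bv\in\mathcal{B}DM_1(\Th)\subset H(\text{div},\Omega)$ has a single-valued normal component across interior faces and, by the second line of \eqref{sys: BDM}, vanishing normal trace on $\partial\Omega$, a global integration by parts leaves no surface contribution, giving $(\nabla p,\cR\bv)_{\Th}=-(\nabla\cdot\cR\bv,p)_{\Th}$. The piecewise-constant field $\nabla\cdot\cR\bv$ only sees the elementwise mean of $p$, so this equals $-(\nabla\cdot\cR\bv,\Pz p)_{\Th}$, and the first line of \eqref{sys: BDM} with the $P_1$-test function taken constant forces the divergence-reproduction identity $(\nabla\cdot\cR\bv,q)_{\Th}=\bb_w(\bv,q)$ for every $q\in Q_h$. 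Hence $(\nabla p,\cR\bv)_{\Th}=-\bb_w(\bv,\Pz p)=-\bb_w(\bv,p)$, where the last equality uses $\bb_w(\bv,\xi_h)=(\nabla_w\cdot\bv,\xi_h)_{\Th}=0$ because $\nabla_w\cdot\bv$ is piecewise constant while $\xi_h=p-\Pz p$ has zero mean on each $T\in\Th$. The delicate point I expect to spend the most care on is the bookkeeping on $\Ehb$: the weak divergence uses $\avg{\bv}=\bv$ on boundary faces whereas $\cR\bv\cdot\bn_e=0$ there, so I must check that the natural imposition of $\bu_h^D=\bzero$ renders the two boundary fluxes compatible, so that the reproduction identity holds with no stray boundary term.

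For the diffusion term I would write $\cR\bv=\bv-(\bv-\cR\bv)$, so that $\nu(-\Delta\bu,\cR\bv)_{\Th}=\nu(-\Delta\bu,\bv)_{\Th}-l_3(\bu,\bv)$ by the definition of $l_3$, and then reuse the manipulation of Lemma~\ref{lemma: erreqn} verbatim on $\nu(-\Delta\bu,\bv)_{\Th}$: integration by parts, the commutative property \eqref{eqn: projprop}, and the definition of the weak gradient give $\nu(-\Delta\bu,\bv)_{\Th}=\nu(\nabla_w(\Theta_h\bu),\nabla_w\bv)_{\Th}-l_1(\bu,\bv)$. Inserting $\Pi_h\bu$ in place of $\Theta_h\bu$ produces $l_2(\bu,\bv)$, and completing $\nu(\nabla_w\Pi_h\bu,\nabla_w\bv)_{\Th}$ to $\ba_w(\Pi_h\bu,\bv)$ produces the stabilization $\mathbf{s}(\Pi_h\bu,\bv)$; combined with the pressure computation this is precisely the consistency identity above. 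Subtracting \eqref{eqn: prmeg1} cancels $(\bbf,\cR\bv)_{\Th}$ and yields \eqref{eqn: erreqnpr1}.

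Finally, the divergence equation \eqref{eqn: erreqnpr2} requires no new work: the constraint \eqref{eqn: prmeg2} is identical to \eqref{eqn: mEG2}, and $\cR$ never appears in it, so the argument of Lemma~\ref{lemma: erreqn}---using the continuity of $\bu$ together with $\bb_w(\bu,q)=0=\bb_w(\bu_h,q)$ and subtracting $\bb_w(\Pi_h\bu,q)$---carries over unchanged.
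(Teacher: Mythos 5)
Your proof is correct and follows essentially the same route as the paper: pair the strong equations with $\cR\bv$, reduce the pressure term to $-\bb_w(\bv,\Pz p)$ via the reconstruction, split the viscous term into $\nu(-\Delta\bu,\bv)_{\Th}$ plus the $l_3$ remainder, reuse the machinery of Lemma~\ref{lemma: erreqn} to reach the consistency identity, and subtract \eqref{eqn: prmeg1}; the constraint equation is handled identically. The only difference is one of detail: you derive the key identity $(\nabla p,\cR\bv)_{\Th}=-\bb_w(\bv,\Pz p)$ (normal continuity, zero boundary normal trace, piecewise-constant divergence, and divergence reproduction from the degrees of freedom in \eqref{sys: BDM}, with the boundary fluxes reconciled by the weak imposition of $\bv^D=\bzero$ on $\Ehb$) directly, whereas the paper simply cites \cite{HuLeeMuYi} for it.
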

\begin{proof}
First of all, we obtain the following identities,
\begin{equation*}
    \left(\nabla p,\cR\bv\right)_{\Th} =-\bb(\bv,\Pz p)= -\bb_w(\bv,\Pz p)
\end{equation*}
because $\cR\bv\cdot\bn$ is continuous on $\partial T$ and $\nabla\cdot\cR\bv$ is constant in $T$. (See \cite{HuLeeMuYi} for details.)
Moreover, we have
\begin{equation*}
    \left(-\Delta\bu,\cR\bv\right)_{\Th}=\left(-\Delta\bu,\bv\right)_{\Th} + \left(\Delta\bu,\bv-\cR\bv\right)_{\Th}.
\end{equation*}
Then, it follows from \eqref{eqn: governing1} and the error equations in Lemma~\ref{lemma: erreqn} that
\begin{equation*}
    \ba_w(\Pi_h\bu,\bv)-\bb_w(\bv,\Pz p)=(\bbf,\cR \bv)_{\Th}+l_1(\bu,\bv)+l_2(\bu,\bv)+l_3(\bu,\bv)+\mathbf{s}(\Pi_h\bu,\bv).
\end{equation*}
By subtracting \eqref{eqn: prmeg1} from this equation, we arrive at the equation \eqref{eqn: erreqnpr1}.
The equation \eqref{eqn: erreqnpr2} is simply derived in the same way as Lemma~\ref{lemma: erreqn}.
\end{proof}

Consequently, the following theorem theoretically shows pressure-robustness of the \texttt{PR-mEG} method.

\begin{theorem}
Let $(\bu,p)\in [H_0^1(\Omega)\cap H^2(\Omega)]^d\times (L_0^2(\Omega)\cap H^1(\Omega))$ be the solution to \eqref{eqn: governing1}-\eqref{eqn: governing3}, and $(\buh,p_h)\in \Vh\times Q_h$ be the discrete solution from the \texttt{PR-mEG} method. Then, we have the following error estimates
  \begin{equation*}
      \trinorm{\Pi_h\bu-\bu_h}\leq Ch\norm{\bu}_2,\quad\norm{\mathcal{P}_0p-p_h}_0\leq C\nu h\norm{\bu}_2.
  \end{equation*}
Therefore, the total error estimates are as follows:
\begin{equation*}
      \trinorm{\bu-\bu_h}\leq Ch\norm{\bu}_2,\quad\norm{p-p_h}_0\leq Ch \left(\nu\norm{\bu}_2+\norm{p}_1\right).
  \end{equation*}
  
\end{theorem}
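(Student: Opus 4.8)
The plan is to mirror the proof of Theorem~\ref{thm: erroresti}, but driven by the \texttt{PR-mEG} error equations \eqref{eqn: erreqnpr1}--\eqref{eqn: erreqnpr2} in place of \eqref{sys: erreqn}. The decisive structural change is that the term $\bb_w(\bv,\xi_h)$, which in the non-robust case fed the factor $\norm{\xi_h}_0\leq Ch\norm{p}_1$ into the velocity bound, is now replaced by $l_3(\bu,\bv)=-\nu(\Delta\bu,\bv-\cR\bv)_{\Th}$, which depends only on $\bu$ and carries an explicit factor of $\nu$. This is exactly where pressure-robustness originates, so the main task is to control $l_3$ correctly.

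The first and most important step is to bound the new supplemental form, namely
\begin{equation*}
    |l_3(\bu,\bv)|\leq C\nu h\norm{\bu}_2\trinorm{\bv}.
\end{equation*}
This follows from the Cauchy--Schwarz inequality together with the approximation property $\norm{\bv-\cR\bv}_{0,\Th}\leq Ch\trinorm{\bv}$ of the reconstruction operator \cite{HuLeeMuYi} and the trivial bound $\norm{\Delta\bu}_0\leq C\norm{\bu}_2$. I expect this to be the main obstacle, since it is the only genuinely new estimate; the key point is that $\cR\bv$ reproduces the continuous component and smooths the jumps of the discontinuous component, giving the extra power of $h$ relative to $\trinorm{\bv}$. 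The remaining forms $l_1$, $l_2$, and $\mathbf{s}$ are already bounded in \eqref{sys: suppest}.

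Next I would isolate $\bb_w(\bv,\epsilon_h)$ from \eqref{eqn: erreqnpr1} and apply the continuity \eqref{eqn: contia}, the bounds \eqref{sys: suppest}, the new bound on $l_3$, and the inf-sup condition \eqref{eqn: infsupb}, to obtain the pressure-robust analog of \eqref{eqn: interest},
\begin{equation*}
    \norm{\epsilon_h}_0\leq C\left(\nu\trinorm{\be_h}+\nu h\norm{\bu}_2\right),
\end{equation*}
in which no pressure term appears. Then, choosing $\bv=\be_h$, $q=\epsilon_h$ and substituting \eqref{eqn: erreqnpr2} into \eqref{eqn: erreqnpr1} yields an identity for $\ba_w(\be_h,\be_h)$; using the coercivity \eqref{eqn: coera}, the bounds \eqref{sys: suppest} and on $l_3$, the estimate $|\bb_w(\bm{\chi}_h,\epsilon_h)|\leq Ch\norm{\bu}_2\norm{\epsilon_h}_0$ exactly as in \eqref{intermediate result 2}, and the $\epsilon_h$ bound above, I arrive at
\begin{equation*}
    \nu\trinorm{\be_h}^2\leq C\left(\nu h\norm{\bu}_2\trinorm{\be_h}+\nu h^2\norm{\bu}_2^2\right).
\end{equation*}
Young's inequality absorbs the cross term and gives $\trinorm{\be_h}\leq Ch\norm{\bu}_2$, with no $\tfrac{1}{\nu}\norm{p}_1$ contribution; feeding this back into the $\epsilon_h$ bound produces $\norm{\Pz p-p_h}_0\leq C\nu h\norm{\bu}_2$.

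Finally, the total error estimates follow from the triangle inequality combined with the interpolation bound \eqref{eqn: Pih_energy_err}, the norm equivalence \eqref{eqn: normeq}, and the projection error \eqref{eqn: P_err}. The total velocity error remains fully pressure-robust, whereas the total pressure error retains the $\norm{p}_1$ term solely because $\norm{p-\Pz p}_0\leq Ch\norm{p}_1$ enters through $\norm{p-p_h}_0\leq\norm{p-\Pz p}_0+\norm{\Pz p-p_h}_0$.
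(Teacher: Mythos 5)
Your proposal is correct and follows essentially the same route as the paper's own proof: bound $l_3$ via Cauchy--Schwarz and the reconstruction estimate $\norm{\bv-\cR\bv}_0\leq Ch\trinorm{\bv}$ from \cite{HuLeeMuYi}, use the inf-sup condition \eqref{eqn: infsupb} to get the pressure-robust intermediate bound $\norm{\epsilon_h}_0\leq C\nu(\trinorm{\be_h}+h\norm{\bu}_2)$, then take $\bv=\be_h$, $q=\epsilon_h$ with the estimate \eqref{intermediate result 2} and Young's inequality to conclude, and finish the total errors by the triangle inequality with \eqref{eqn: Pih_energy_err}, \eqref{eqn: P_err}, and \eqref{eqn: normeq}. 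No gaps; the argument matches the paper step for step.
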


\begin{proof}
To begin with, we observe the error equation \eqref{eqn: erreqnpr1},
\begin{equation*}
    \bb_w(\bv,\epsilon_h)=\ba_w(\be_h,\bv)-l_1(\bu,\bv)-l_2(\bu,\bv)-l_3(\bu,\bv)-\mathbf{s}(\Pi_h\bu,\bv).
\end{equation*}
Here, the bilinear form $l_3(\bu,\bv)$ is bounded using the Cauchy-Schwarz inequality,
\begin{align*}
    |l_3(\bu,\bv)|\leq \nu\norm{\Delta\bu}_0\norm{\bv-\cR\bv}_0\leq \nu \norm{\bu}_2\norm{\bv-\cR\bv}_0.
\end{align*}
It also follows from the estimate $\norm{\bv-\cR\bv}_0$ in \cite{HuLeeMuYi} and the norm equivalence \eqref{eqn: normeq} that
\begin{equation*}
   \norm{\bv-\cR\bv}_0\leq Ch\trinorm{\bv},
\end{equation*}
so we arrive at
\begin{equation}
    |l_3(\bu,\bv)|\leq C\nu h\norm{\bu}_2\trinorm{\bv}.\label{eqn: suppest3}
\end{equation}
Thus,
from \eqref{eqn: contia}, \eqref{sys: suppest}, and \eqref{eqn: suppest3}, we obtain
\begin{equation*}
    |\bb_w(\bv,\epsilon_h)|\leq C\left(\nu\trinorm{\be_h}+\nu h\norm{\bu}_2\right)\trinorm{\bv}.
\end{equation*}
Hence, the inf-sup condition \eqref{eqn: infsupb} leads to
\begin{equation}
    \norm{\epsilon_h}_0\leq C\nu\left(\trinorm{\be_h}+h\norm{\bu}_2\right).\label{eqn: intresult}
\end{equation}
Similar to the proof of Theorem~\ref{thm: erroresti}, choosing $\bv=\be_h$ and $q=\epsilon_h$ yields that
\begin{equation*}
    \ba_w(\be_h,\be_h)=-\bb_w(\bm{\chi}_h,\epsilon_h)+l_1(\bu,\be_h)+l_2(\bu,\be_h)+l_3(\bu,\be_h)+\mathbf{s}(\Pi_h\bu,\be_h).
\end{equation*}
From \eqref{intermediate result 2} and \eqref{eqn: intresult}, we get the following intermediate result, 
\begin{equation*}
    |\bb_w(\bm{\chi}_h,\epsilon_h)|
    \leq Ch\norm{\bu}_2\norm{\epsilon_h}_0
    \leq C\nu h\norm{\bu}_2\left(\trinorm{\be_h}+h\norm{\bu}_2\right).
\end{equation*}
Therefore, it follows from \eqref{eqn: coera}, \eqref{sys: suppest}, and \eqref{eqn: suppest3} that
\begin{equation*}
    \nu\trinorm{\be_h}^2\leq C\nu\left( h\norm{\bu}_2\trinorm{\be_h}+ h^2\norm{\bu}_2^2\right).
\end{equation*}
The Young's inequality gives
\begin{equation*}
    h\norm{\bu}_2\trinorm{\be_h}\leq\frac{h^2}{2\kappa}\norm{\bu}_2^2+\frac{\kappa}{2}\trinorm{\be_h}^2,
\end{equation*}
so choosing a proper $\kappa$ implies
\begin{equation*}
    \nu\trinorm{\be_h}^2\leq C\nu h^2\norm{\bu}_2^2.
\end{equation*}
Therefore, together with \eqref{eqn: intresult}, we obtain
\begin{equation*}
    \trinorm{\be_h}\leq Ch\norm{\bu}_2,\quad \norm{\epsilon_h}_0\leq C\nu h\norm{\bu}_2.
\end{equation*}
\end{proof}


\section{Numerical Experiments}
\label{sec:nume}

In this section, we present numerical experiments validating our theoretical results with two- and three-dimensional examples.
The numerical experiments are implemented by authors' codes developed based on iFEM \cite{CHE09}.
The numerical methods mentioned in this paper and their discrete solutions are denoted as follows:
\begin{itemize}
    \item $(\bu_h^{\texttt{EG}},p_h^{\texttt{EG}})$: Solution by the \texttt{EG} method \cite{YiEtAl22-Stokes} in Algorithm~\ref{alg:EG}.
    \item $(\bu_h^{\texttt{mEG}},p_h^{\texttt{mEG}})$: Solution by the \texttt{mEG} method in Algorithm~\ref{alg:mEG}.
    \item $(\bu_h^{\texttt{PR}},p_h^{\texttt{PR}})$: Solution by the \texttt{PR-mEG} method in Algorithm~\ref{alg:PR-mEG}.
\end{itemize}
We compare the penalty terms in the \texttt{EG} and \texttt{mEG} methods,
\begin{align}
    &\texttt{EG}: \text{Penalty term of }\ba(\bu_h^{\texttt{EG}},\bv)\quad\rightarrow\quad\nu\rho \langle h_e^{-1}\jump{\bu_h^{\texttt{EG}}},
\jump{\bv}\rangle_{\Eh},\label{penaltyEG}\\
&\texttt{mEG}:\text{Penalty term of }\ba_w(\bu_h^{\texttt{mEG}},\bv)\quad\rightarrow\quad\nu\rho_{\texttt{m}}\langle h_e^{-1}\jump{\bu_h^{\texttt{mEG}}},
\jump{\bv}\rangle_{\Eh}\label{penaltymEG},
\end{align}
where $\rho_{\texttt{m}}$ is a hypothetical \texttt{mEG} penalty parameter for comparison and the \texttt{mEG} method uses the fixed parameter $\rho_{\texttt{m}}=1$.
While a sufficiently large penalty parameter $\rho$ is required for the \texttt{EG} method, our \texttt{mEG} method ($\rho_\texttt{m}=1$) is a parameter-free EG method under the same finite dimensional velocity and pressure spaces.
We recall the error estimates for the \texttt{mEG} method in Section \ref{sec:wellerror}:
\begin{subequations}\label{sys: errboundst}
\begin{alignat}{1}
 \trinorm{\Pi_h\bu-\bu_h^{\texttt{mEG}}}\lesssim h \left( \|\mathbf{u}\|_2+ \nu^{-1}\|p\|_1 \right ), \quad 
&\|\mathcal{P}_0p-p_h^{\texttt{mEG}}\|_0\lesssim  h \left( \nu \|\mathbf{u}\|_2+ \|p\|_1 \right ), \label{eqn: errboundstu} \\
 \trinorm{\bu-\bu_h^{\texttt{mEG}}}\lesssim h \left( \|\mathbf{u}\|_2+ \nu^{-1}\|p\|_1 \right ), \quad 
&\|p-p_h^{\texttt{mEG}}\|_0 \lesssim  h \left ( \nu\|\mathbf{u}\|_2+ \|p\|_1\right ),\label{eqn: errboundstp}
\end{alignat}
\end{subequations}
which means the same rates of convergence as the \texttt{EG} method.
Moreover, we developed a pressure-robust numerical scheme corresponding to the \texttt{mEG} method, and the error estimates for the \texttt{PR-mEG} method proved in Section \ref{sec:PRmEG} are as follows:
\begin{subequations}\label{sys: errboundpr}
\begin{alignat}{1}
 \trinorm{\Pi_h\bu-\bu_h^{\texttt{PR}}}\lesssim h \|\mathbf{u}\|_2 , \quad 
&\|\mathcal{P}_0p-p_h^{\texttt{PR}}\|_0\lesssim   \nu h \|\mathbf{u}\|_2, \label{eqn: errboundpru} \\
 \trinorm{\bu-\bu_h^{\texttt{PR}}}\lesssim h \|\mathbf{u}\|_2, \quad 
&\|p-p_h^{\texttt{PR}}\|_0 \lesssim  h \left ( \nu\|\mathbf{u}\|_2+ \|p\|_1\right ).\label{eqn: errboundprp}
\end{alignat}
\end{subequations}
In two- and three-dimensional examples, we demonstrate the well-posedness and optimal rates of convergence for the \texttt{mEG} method.
By checking the behaviors of the errors with decreasing viscosity $\nu$, we confirm the error estimates of the \texttt{PR-mEG} method in \eqref{sys: errboundpr}, which means more accurate numerical solutions than the \texttt{mEG} method in the case of small viscosity $\nu\ll1$.


\subsection{Two dimensional examples}

Let the computational domain be $\Omega=(0,1)\times (0,1)$. The velocity field and pressure are chosen as
\begin{equation}
    \mathbf{u}
    = \left(\begin{array}{c}
    10x^2(x-1)^2y(y-1)(2y-1) \\
    -10x(x-1)(2x-1)y^2(y-1)^2
    \end{array}\right),
    \quad
    p = 10(2x-1)(2y-1).
    \label{eqn: example1}
\end{equation}
Then, the body force $\mathbf{f}$ is obtained from the Stokes equations in \eqref{sys: governing}, and 
the homogeneous boundary condition for velocity is considered.


\subsubsection{Parameter-free test}
We check the errors and condition numbers of the stiffness matrices for the \texttt{EG} and \texttt{mEG} methods
with different penalty parameters.
To see how the penalty parameters affect the performance of the two methods, we apply the penalty terms \eqref{penaltyEG}-\eqref{penaltymEG} and change $\rho$ and $\rho_\texttt{m}$ from 0.1 to 5.
In the test, we choose the uniform triangular mesh with $h=1/16$ and the viscosity $\nu=1$.
\begin{figure}[h!]
    \centering
    \includegraphics[width=.3\textwidth]{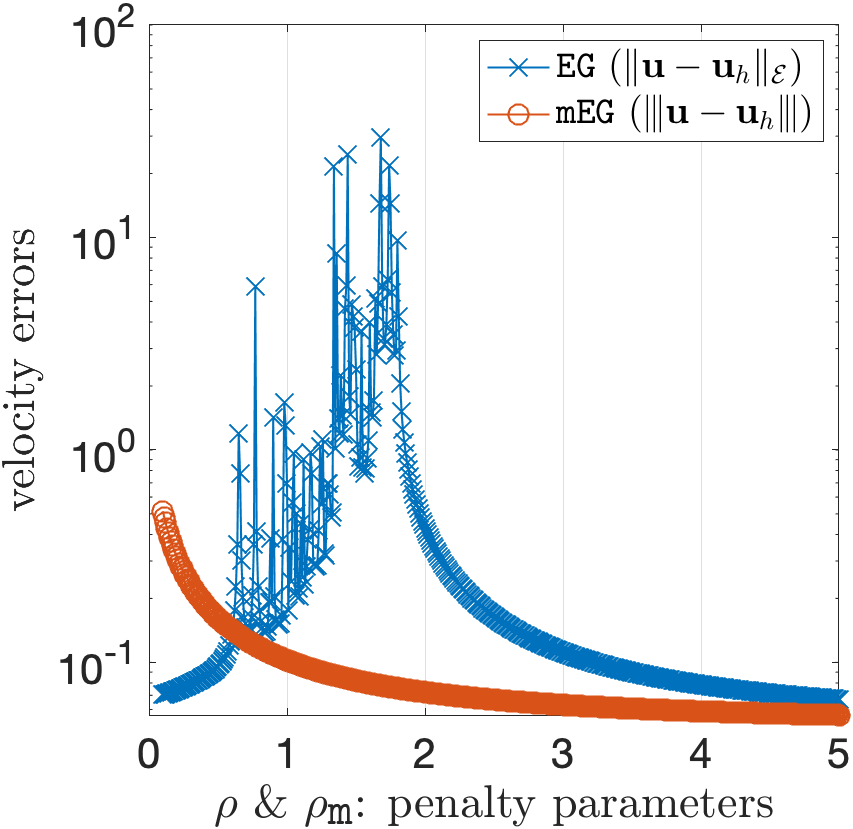}
    \hskip 5pt
    \includegraphics[width=.3\textwidth]{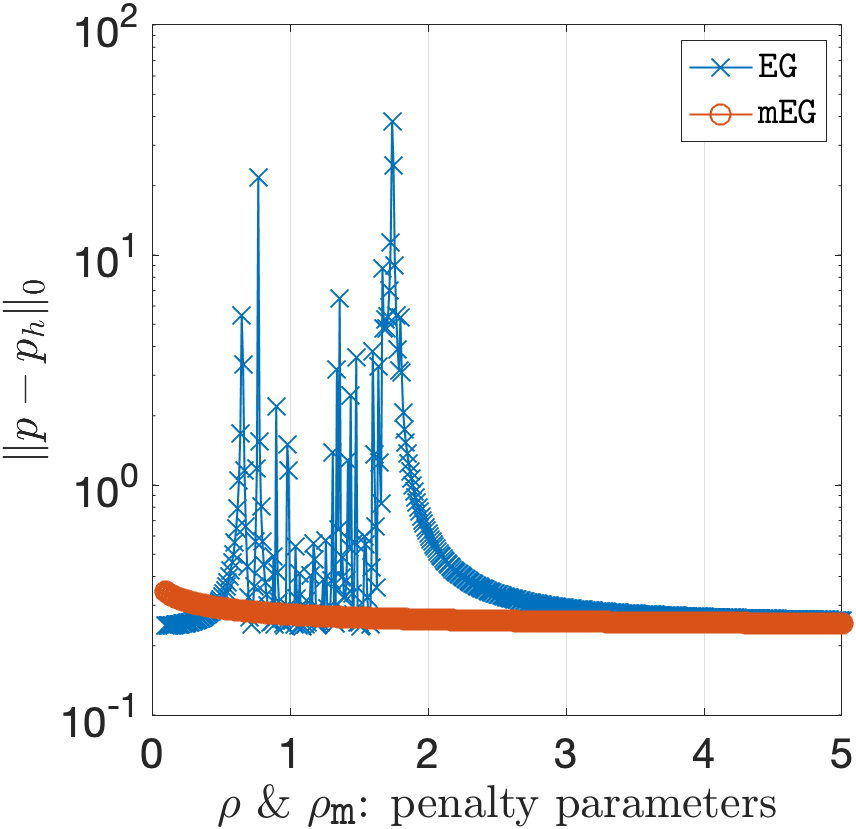}
    \hskip 5pt
    \includegraphics[width=.3\textwidth]{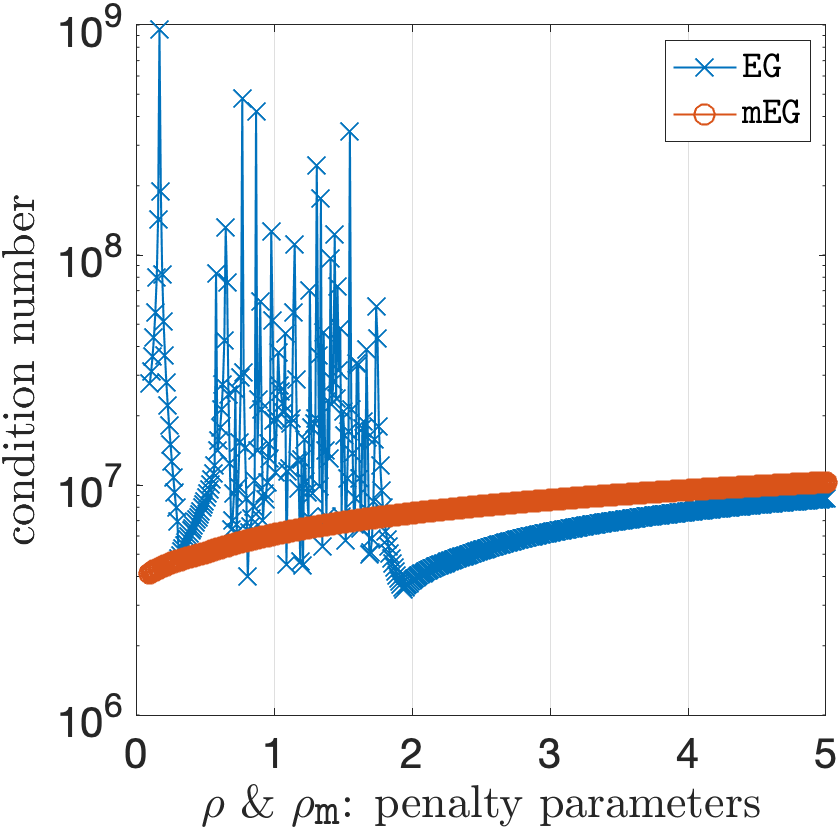}
    \caption{Errors and condition numbers of \texttt{EG} and \texttt{mEG} for $0.1\leq\rho,\rho_\texttt{m}\leq 5$ ($\nu=1$, $h=1/16$).}
    \label{figure: para-errors}
\end{figure}
Figure \ref{figure: para-errors} shows that
the \texttt{EG} method seems to yield unstable errors and condition numbers with penalty parameters less than 2, which implies the need of a sufficiently large parameter for the stability.
On the other hand, the \texttt{mEG} method shows stable behaviors in the errors and condition numbers for any positive parameter $\rho_\texttt{m}$.

\begin{table}[h!]
    \centering
    \begin{tabular}{|c||c|c||c|c|}
    \hline
        &  \multicolumn{2}{c||}{\texttt{EG} ($\rho=1$)
        } & 
         \multicolumn{2}{c|}{\texttt{mEG} ($\rho_\mathtt{m}=1$)} \\
    \cline{2-5}   
       $h$  & { $\|\mathbf{u}-\mathbf{u}_h^{\texttt{EG}}\|_\mathcal{E}$} & {Rate} &
       { $\trinorm{\mathbf{u}-\mathbf{u}_h^{\texttt{mEG}}}$} & {Rate} \\ 
       \hline
       $1/8$ & 7.394e-1 & - &  2.749e-1 & - \\
       \hline
       $1/16$  & 6.931e-1 & 0.09 & 1.024e-1 & 1.42  \\
       \hline
       $1/32$   & 2.440e-1 & 1.51 & 3.940e-2 & 1.38  \\
       \hline
       $1/64$  & 9.052e-2 & 1.43 & 1.606e-2 & 1.29  \\
       \hline
       \hline
        $h$ & {\small$\|p-p_h^{\texttt{EG}}\|_0$} & {\small Rate} &
       {\small$\|p-p_h^{\texttt{mEG}}\|_0$} & {\small Rate}  \\ 
       \hline
       $1/8$ & 9.299e-1 & - &  5.815e-1 & -  \\
       \hline
       $1/16$  & 2.897e-1 & 1.68 & 2.733e-1 & 1.09 \\
       \hline
       $1/32$   & 2.319e-1 & 0.32 & 1.322e-1 & 1.05  \\
       \hline
       $1/64$  & 2.664e-1 & -0.20 & 6.498e-2 & 1.02 \\
       \hline
    \end{tabular}
    \caption{A mesh refinement study for \texttt{EG} and \texttt{mEG} with varying mesh size $h$ and $\nu=1$.}
    \label{table: para-errors}
\end{table}
We also perform a mesh refinement study for the \texttt{EG} and \texttt{mEG} methods when $\rho=\rho_\texttt{m}=1$.
In Table \ref{table: para-errors}, the errors of the \texttt{EG} method fail to converge due to the insufficiently large penalty parameter.
However, the \texttt{mEG} method produces the velocity and pressure errors that indicate at least the first-order convergence.

\begin{figure}[h!]
\centering
    \texttt{EG} ($\rho=1$): $u_1$, $u_2$, and $p$ from left to right\\
    \includegraphics[width=.3\textwidth]{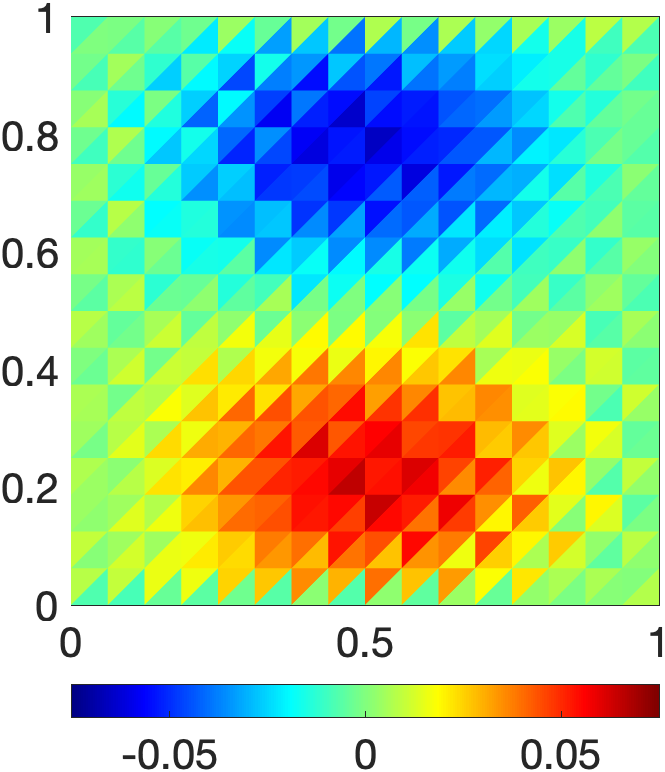}
    \hskip 10pt
    \includegraphics[width=.3\textwidth]{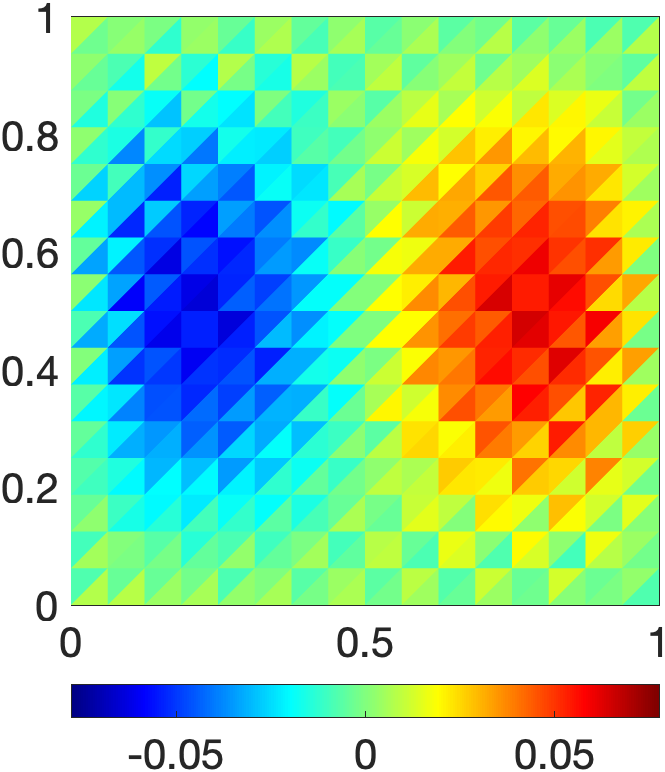}
    \hskip 10pt
    \includegraphics[width=.3\textwidth]{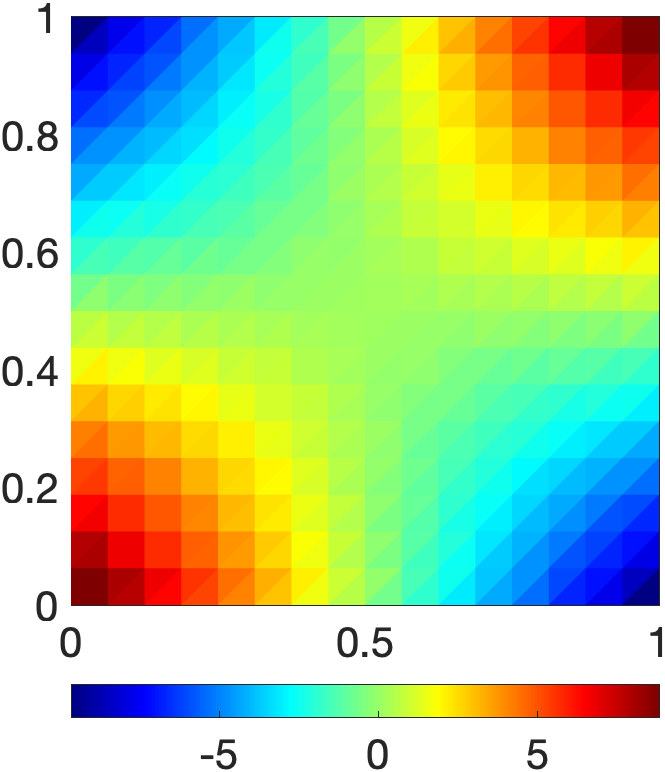}\\
    \vskip 10pt
    \texttt{mEG} ($\rho_\texttt{m}=1$): $u_1$, $u_2$, and $p$ from left to right\\
    \includegraphics[width=.3\textwidth]{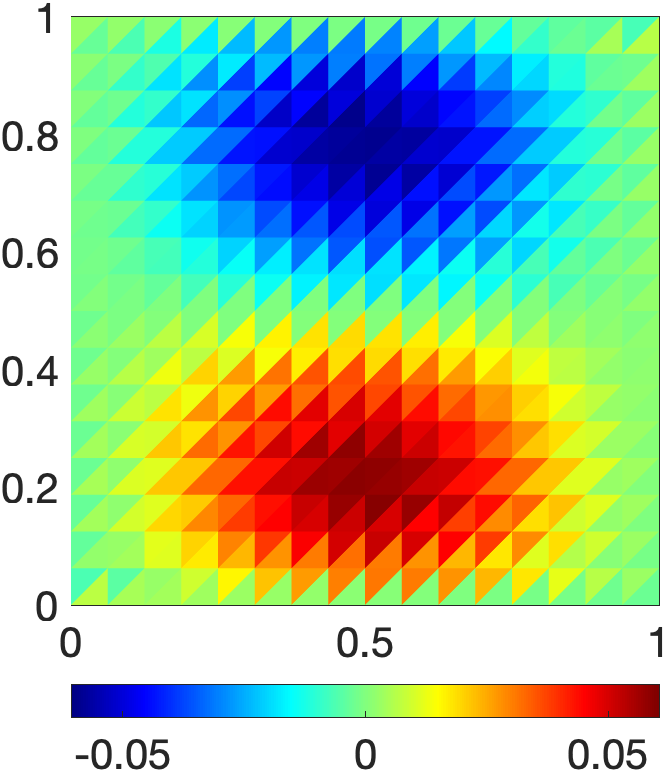}
    \hskip 10pt
    \includegraphics[width=.3\textwidth]{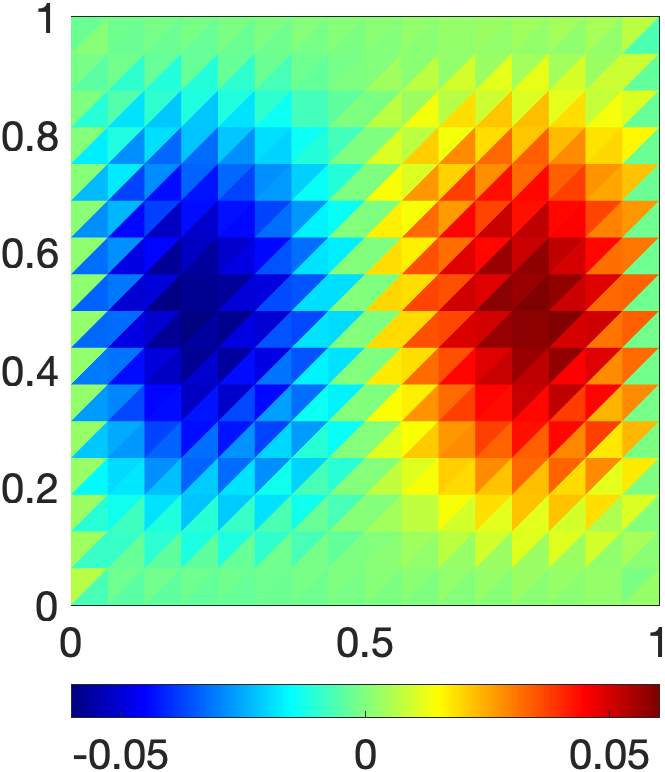}
    \hskip 10pt
    \includegraphics[width=.3\textwidth]{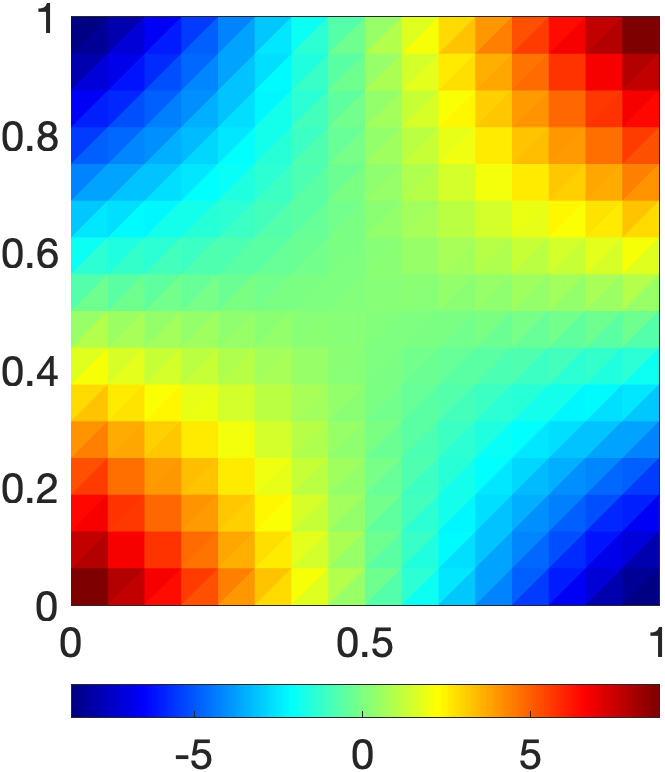}
    \caption{Comparison of the numerical solutions with $h=1/16$ and $\nu=1$.}
    \label{figure: compare numesol}
\end{figure}

Moreover, we compare the numerical solutions of the \texttt{EG} and \texttt{mEG} methods when $h=1/16$, $\nu=1$, and $\rho=\rho_{\texttt{m}}=1$.
In Figure \ref{figure: compare numesol}, the numerical velocity of the \texttt{EG} method roughly captures the vortex flow pattern, but some relatively large jumps appear throughout the numerical velocity solution.
The \texttt{mEG} method, however, produces more stable numerical velocity that well captures the pattern compared to the \texttt{EG} method.

\begin{figure}[h!]
\centering
    \texttt{Perturbed mesh}: its mesh quality and velocity errors with respect to $\rho$ and $\rho_\texttt{m}$\\
    \includegraphics[width=.26\textwidth]{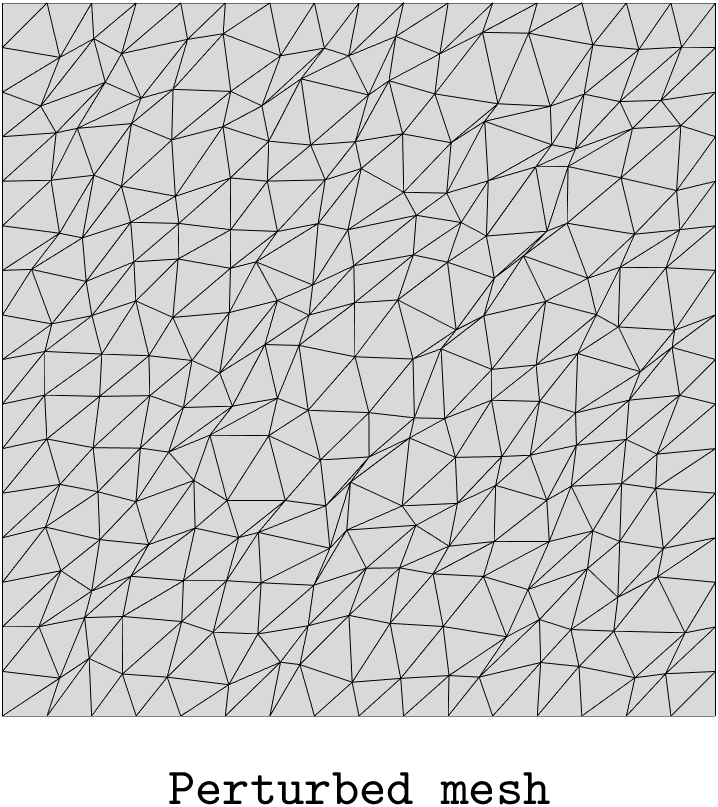}
    \hskip 5pt
    \includegraphics[width=.3\textwidth]{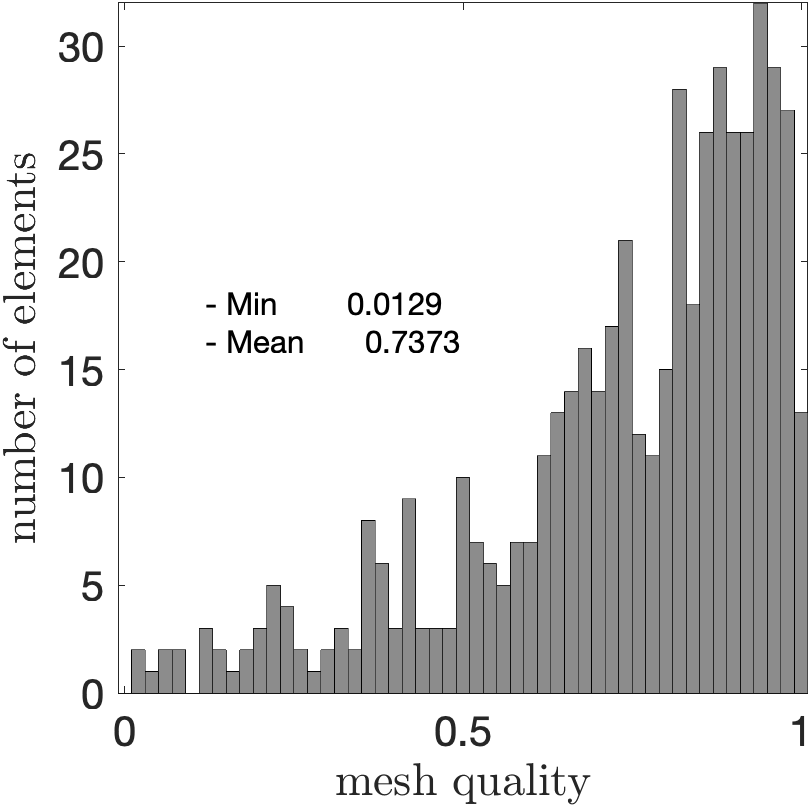}
    \hskip 5pt
    \includegraphics[width=.31\textwidth]{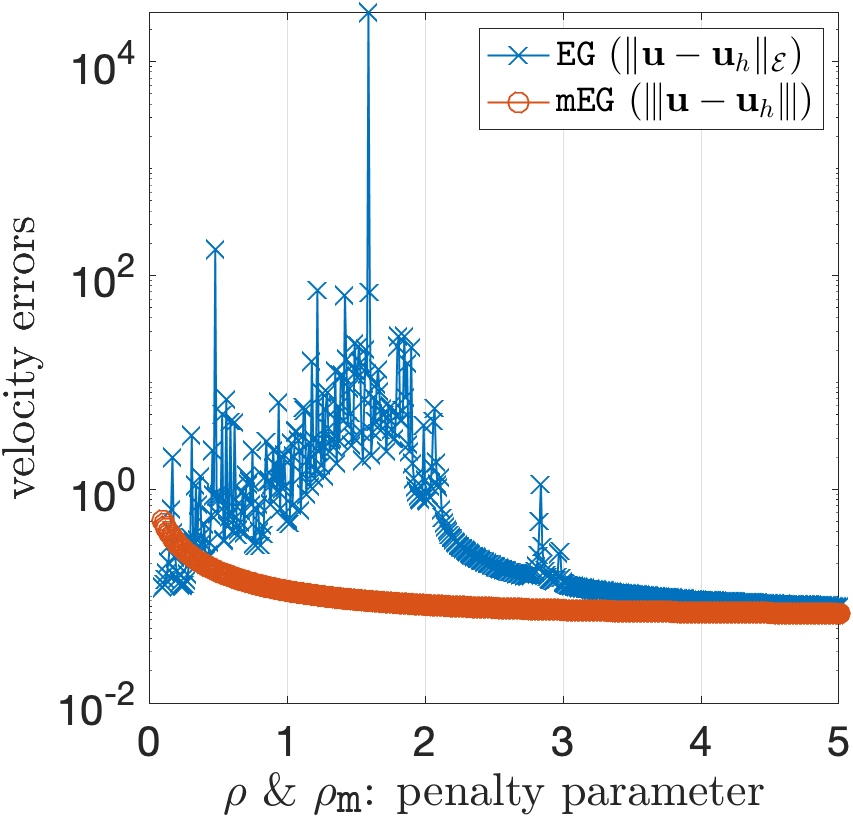}\\
    \vskip 10pt
    \texttt{Square with hole}: its mesh quality and velocity errors with respect to $\rho$ and $\rho_\texttt{m}$\\
    \includegraphics[width=.26\textwidth]{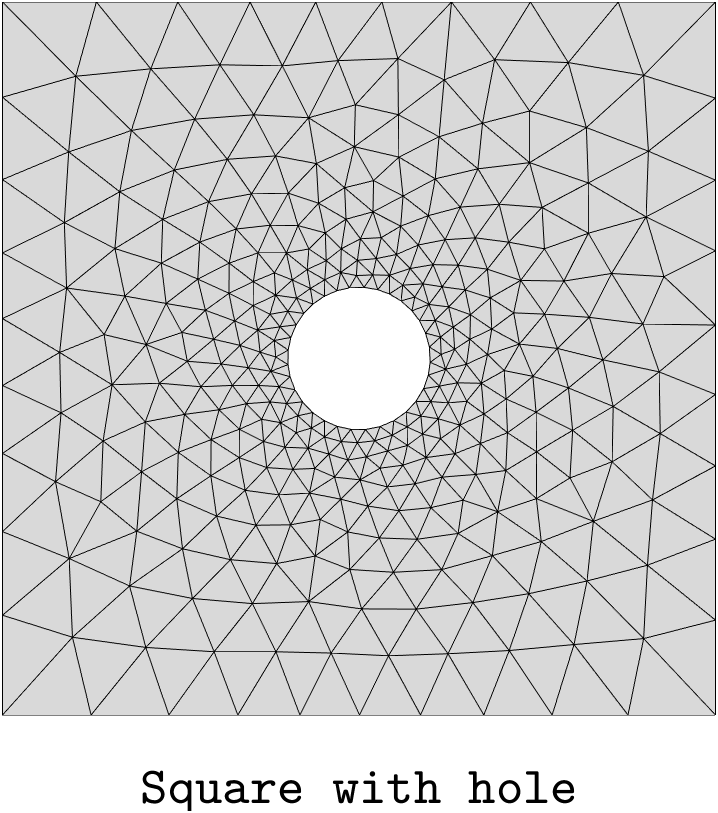}
    \hskip 5pt
    \includegraphics[width=.3\textwidth]{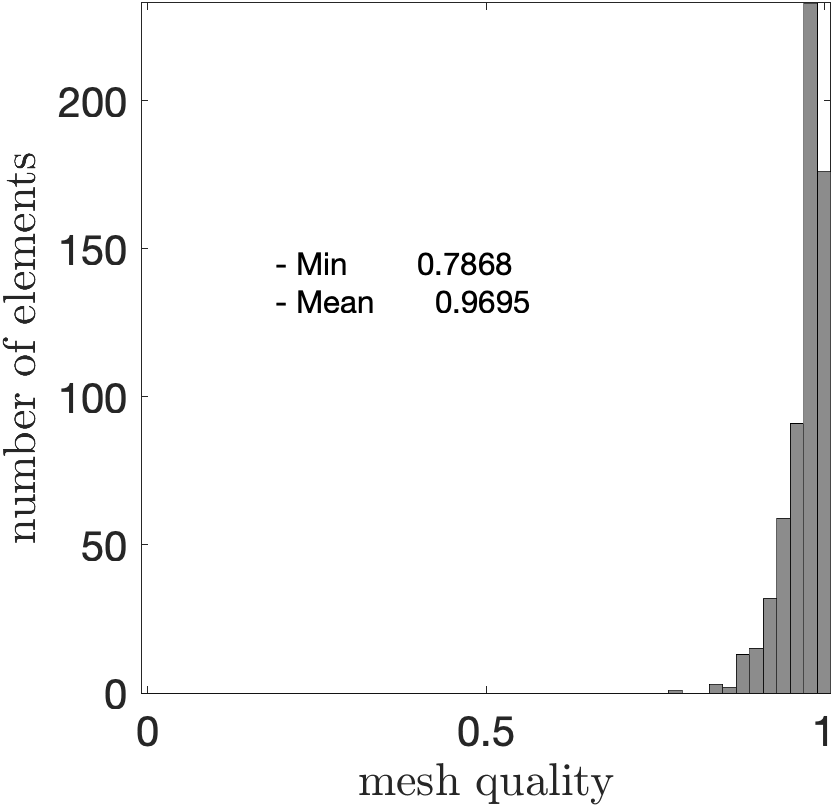}
    \hskip 5pt
    \includegraphics[width=.31\textwidth]{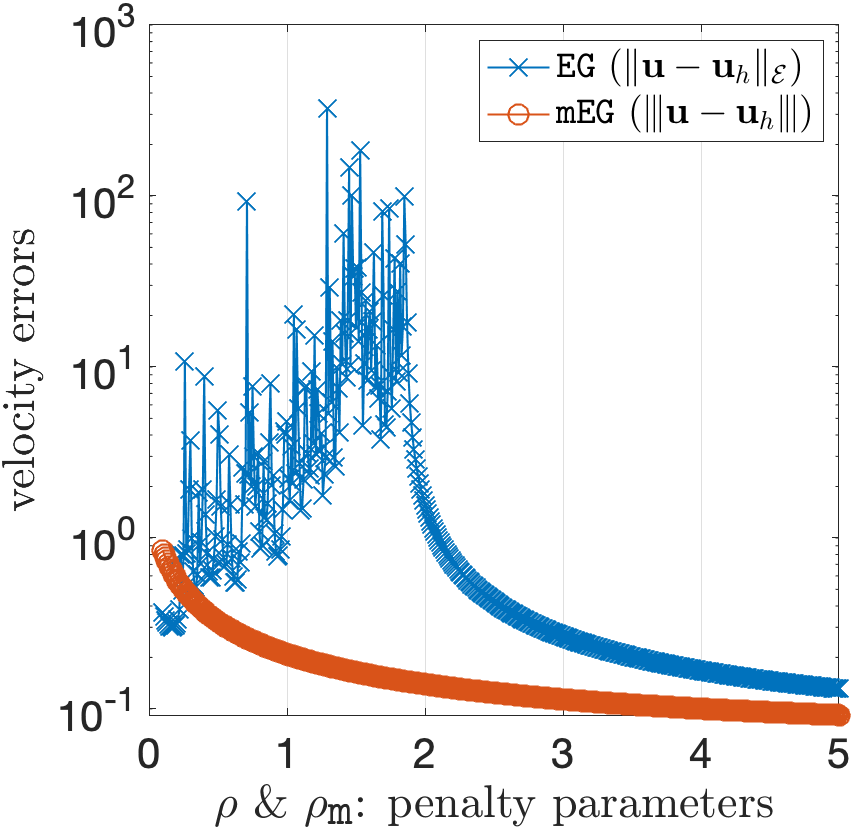}\\
    \vskip 10pt
    \texttt{L-shape}: its mesh quality and velocity errors with respect to $\rho$ and $\rho_\texttt{m}$\\
    \includegraphics[width=.26\textwidth]{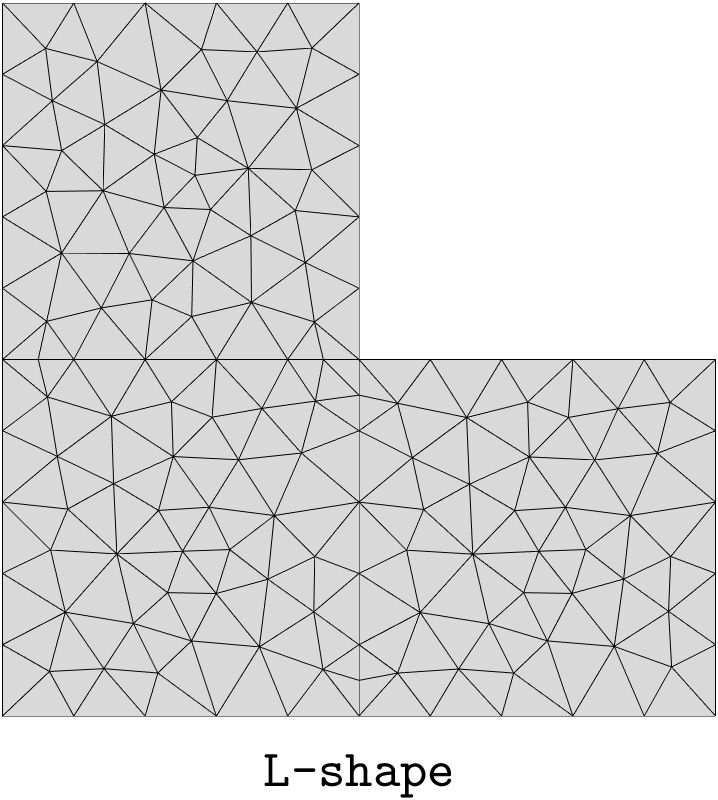}
    \hskip 5pt
    \includegraphics[width=.3\textwidth]{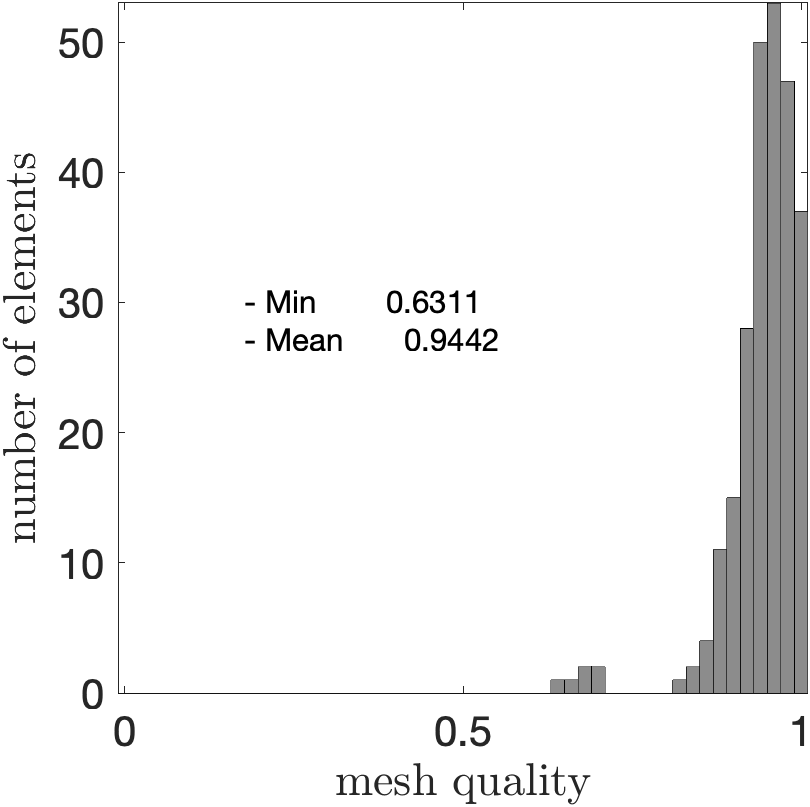}
    \hskip 5pt
    \includegraphics[width=.31\textwidth]{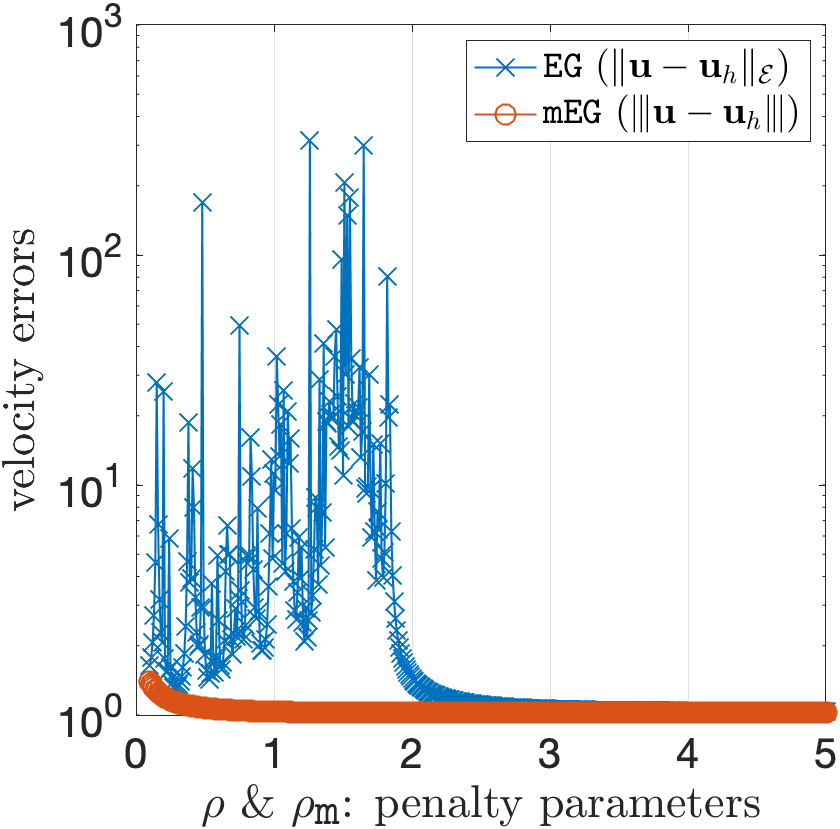}
    \caption{Comparison of \texttt{EG} and \texttt{mEG} for $0.1\leq\rho,\rho_\texttt{m}\leq 5$ on various meshes ($\nu=1$).}
    \label{figure: para-meshtype}
\end{figure}
We also conduct the parameter-free test on various meshes presented in Figure \ref{figure: para-meshtype}:
\begin{itemize}
    \item \texttt{Perturbed mesh}: The uniform triangular mesh is randomly perturbed, so some very sharp triangles are generated. The velocity field and pressure in \eqref{eqn: example1} are considered. The homogeneous boundary condition is preserved. 
    \item \texttt{Square with hole}: The computational domain is the unit square with a hole in the middle.
    An adaptive mesh is generated with triangles of different sizes.
    The velocity field and pressure in \eqref{eqn: example1} are considered. 
    The homogeneous boundary condition is preserved on the outer square, but a non-homogeneous boundary condition occurs on the circle in the middle.
    \item \texttt{L-shape}: The computational domain is the L-shaped domain, and it is discretized with quasi-uniform triangles.
    The velocity field and pressure are chosen as
    $\bu(x,y)=(\sin(\pi x)\sin(\pi y),\cos(\pi x)\cos(\pi y))$ and $p(x,y) = |y|$.
\end{itemize}
Figure \ref{figure: para-meshtype} shows the above meshes, the corresponding mesh qualities, and the velocity errors with different penalty parameters.
The mesh quality of a triangle \cite{chen2004mesh} is defined as the ratio of its area to the sum of the squares of its sides, which implies that the equilateral triangle has the best mesh quality 1 and sharper triangles are closer to 0.
For the \texttt{EG} method, sufficiently large penalty parameters are required to achieve a desired accuracy.
Moreover, such large parameters are depending on the meshes.
To be specific, in \texttt{Perturbed mesh}, some bad quality triangles cause an unexpected spike around $\rho=3$ in the velocity errors, which makes it more difficult to choose a proper penalty parameter.
However, on all the given meshes, the \texttt{mEG} method seems uniformly stable with any positive penalty parameter.
The \texttt{mEG} method, even for the mesh with bad quality triangles, has good performance.
These numerical results confirm that the \texttt{mEG} method is a parameter-free scheme.


\subsubsection{Pressure-robustness test}

In this test, we verify the pressure-robustness of the \texttt{PR-mEG} method.
We solve the example problem \eqref{eqn: example1} with varying $\nu$, from $10^{-2}$ to $10^{-6}$, to confirm the error behaviors expected in \eqref{sys: errboundst} and \eqref{sys: errboundpr}.
The mesh size is fixed as $h=1/32$.
\begin{figure}[h!]
    \centering
    \includegraphics[width=.35\textwidth]{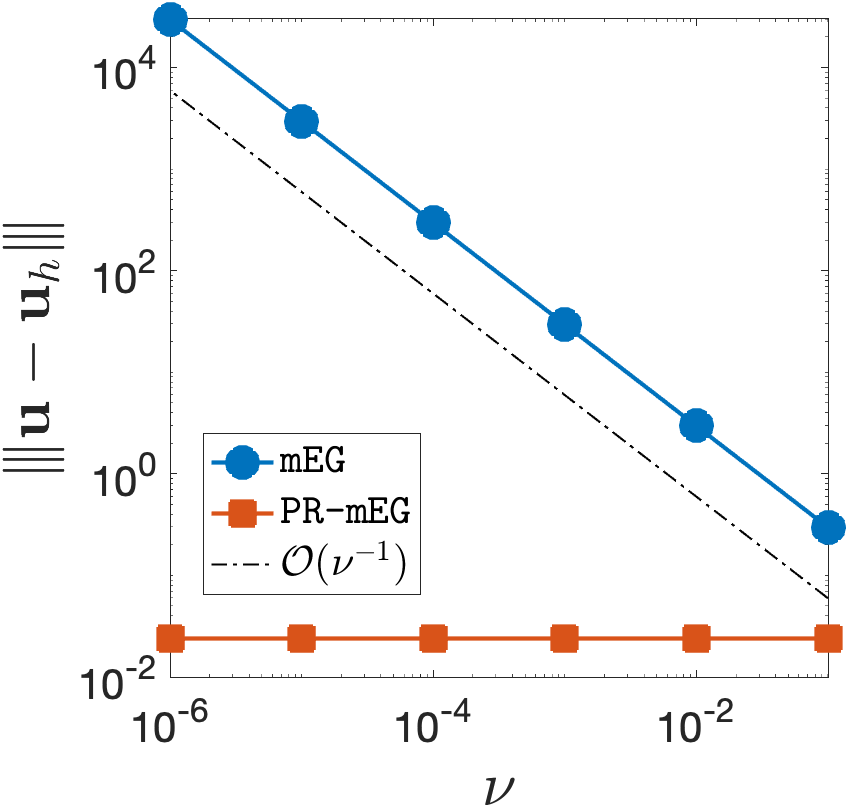}
    \hskip 20pt
    \includegraphics[width=.35\textwidth]{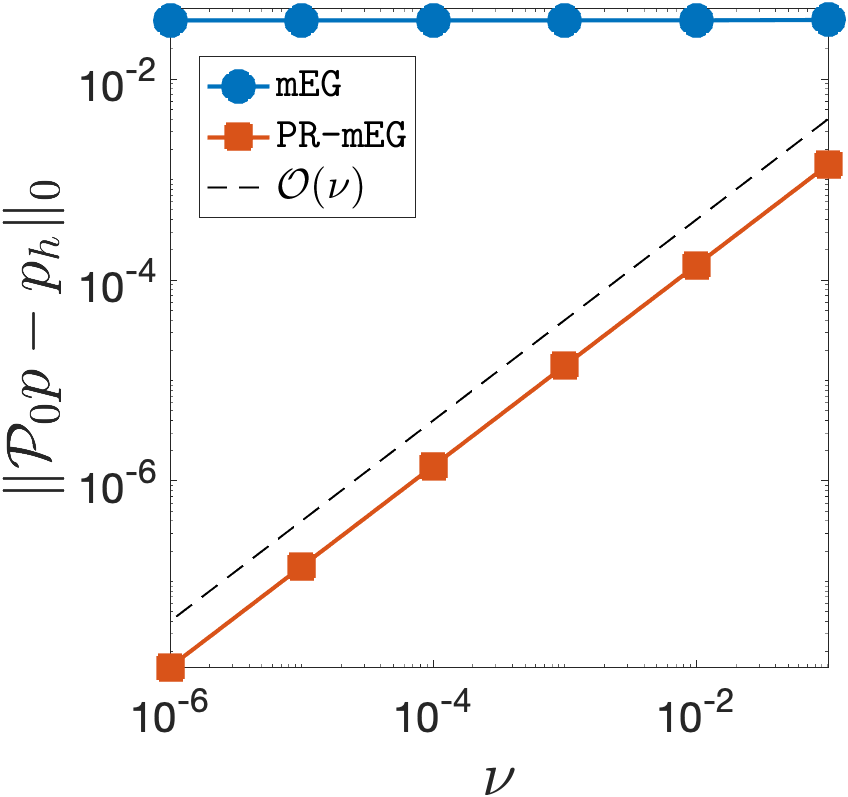}
    \caption{Error profiles of the \texttt{mEG} and \texttt{PR-mEG} methods with varying $\nu$ values and a fixed mesh size $h=1/32$.}
    \label{figure: mEGvsPRmEG}
\end{figure}
Figure~\ref{figure: mEGvsPRmEG} shows the velocity errors $\trinorm{\bu-\bu_h}$ and  pressure errors $\norm{\mathcal{P}_0p-p_h}_0$ of the \texttt{mEG} and \texttt{PR-mEG} methods.
In Figure~\ref{figure: mEGvsPRmEG}, the \texttt{mEG} method produces the velocity errors proportional to $\nu^{-1}$ because the second term $h\nu^{-1}\norm{p}_1$ of the error bound \eqref{eqn: errboundstp} becomes dominant as $\nu$ gets smaller.
Also, since the pressure error $\norm{\mathcal{P}_0p-p_h^{\texttt{mEG}}}_0$ is bounded by a dominant term $h\norm{p}_1$, the error remains the same.
On the other hand, the \texttt{PR-mEG} method produces the same velocity errors regardless of $\nu$, and its pressure errors decrease in proportion to $\nu$.
These numerical results support our theoretical error estimates related to the pressure-robustness in \eqref{sys: errboundst} and \eqref{sys: errboundpr}.

\begin{table}[h!]
    \centering
    \begin{tabular}{|c||c|c||c|c|}
    \hline
        &  \multicolumn{2}{c||}{\texttt{mEG}} & 
         \multicolumn{2}{c|}{\texttt{PR-mEG}} \\
    \cline{2-5}   
       $h$  & { $\trinorm{\mathbf{u}-\mathbf{u}_h^{\texttt{mEG}}}$} & {Rate} &
       { $\trinorm{\mathbf{u}-\mathbf{u}_h^{\texttt{PR}}}$} & {Rate} \\
       \hline
       $1/8$ & 2.577e+5 & - &  9.727e-2 & - \\
       \hline
       $1/16$  & 9.097e+4 & 1.50 & 4.749e-2 & 1.03  \\
       \hline
       $1/32$   & 3.183e+4 & 1.52 & 2.339e-2 & 1.02  \\
       \hline
       $1/64$  & 1.116e+4 & 1.51 & 1.159e-2 & 1.01  \\
       \hline
       \hline
        $h$ & {$\|p-p_h^{\texttt{mEG}}\|_0$} & {Rate} &
       {$\|p-p_h^{\texttt{PR}}\|_0$} & {Rate}  \\ 
       \hline
       $1/8$ & 5.736e-1 & - &  4.802e-1 & -  \\
       \hline
       $1/16$  & 2.694e-1 & 1.09 & 2.404e-1 & 1.00 \\
       \hline
       $1/32$   & 1.310e-1 & 1.04 & 1.203e-1 & 1.00  \\
       \hline
       $1/64$  & 6.464e-2 & 1.02 & 6.014e-2 & 1.00 \\
       \hline
    \end{tabular}
    \caption{A mesh refinement study for \texttt{mEG} and \texttt{PR-mEG} with varying mesh size $h$ and $\nu=10^{-6}$.}
    \label{table: mEGvsPRmEG}
\end{table}
Furthermore, we perform a mesh refinement study for the \texttt{mEG} and \texttt{PR-mEG} methods with decreasing mesh size $h$ and fixed $\nu = 10^{-6}$.
As shown in Table~\ref{table: mEGvsPRmEG}, the velocity and pressure errors for both methods decrease in at least the first order of convergence, and the pressure errors look very similar in magnitude.
However, even though the velocity errors for the \texttt{mEG} method decrease at a faster rate, the magnitude of the errors seems huge. 
Thus, it may not be possible to obtain accurate numerical velocity from the \texttt{mEG} method unless $h$ is small enough.
On the other hand, the \texttt{PR-mEG} method yields about a million times smaller velocity errors than the \texttt{mEG} method.
This means that the \texttt{PR-mEG} method provides a significantly improved numerical velocity for the Stokes equations with small viscosity, which is an important feature of pressure-robust numerical schemes.


\subsection{Three dimensional examples}
\label{3dexample}

We consider a 3D flow in a unit cube $\Omega=(0,1)^3$. The velocity field and pressure are chosen as
\begin{equation}
    \bu 
    = \left(\begin{array}{c}
    \sin(\pi x)\cos(\pi y) - \sin(\pi x)\cos(\pi z) \\
    \sin(\pi y)\cos(\pi z) - \sin(\pi y)\cos(\pi x) \\
    \sin(\pi z)\cos(\pi x) - \sin(\pi z)\cos(\pi y)
    \end{array}\right),\quad
    p = \sin(\pi x)\sin(\pi y)\sin(\pi z).
    \label{eqn: example2}
\end{equation}


\subsubsection{Parameter-free test}
In this example, with different penalty parameters, we compute the velocity and pressure errors and condition numbers in the \texttt{EG} and \texttt{mEG} methods.
\begin{figure}[h!]
    \centering
    \includegraphics[width=.3\textwidth]{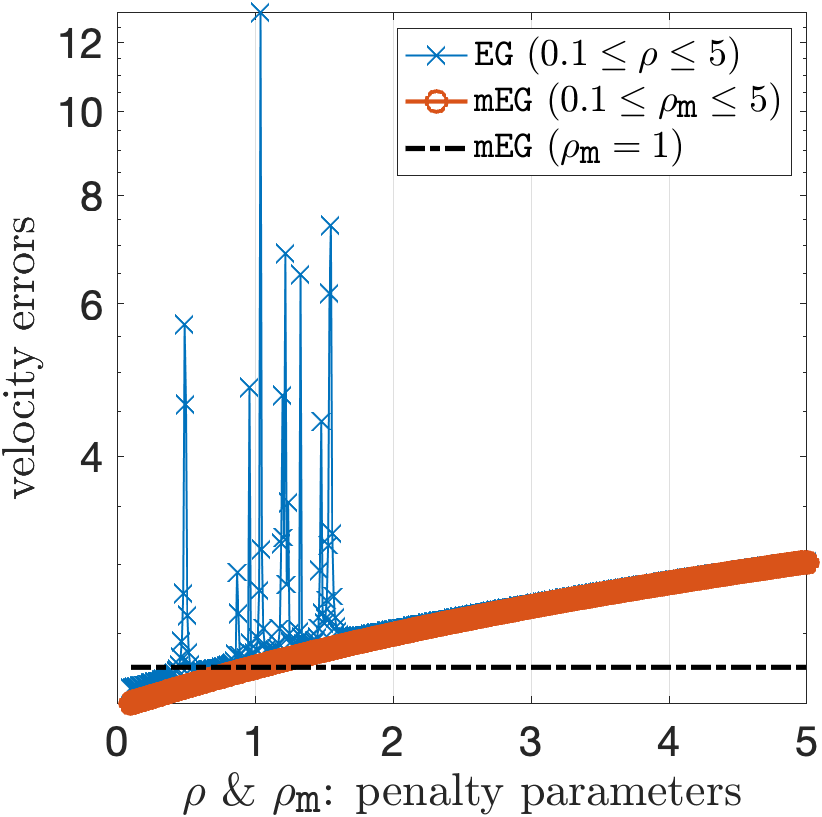}
    \hskip 5pt
    \includegraphics[width=.3\textwidth]{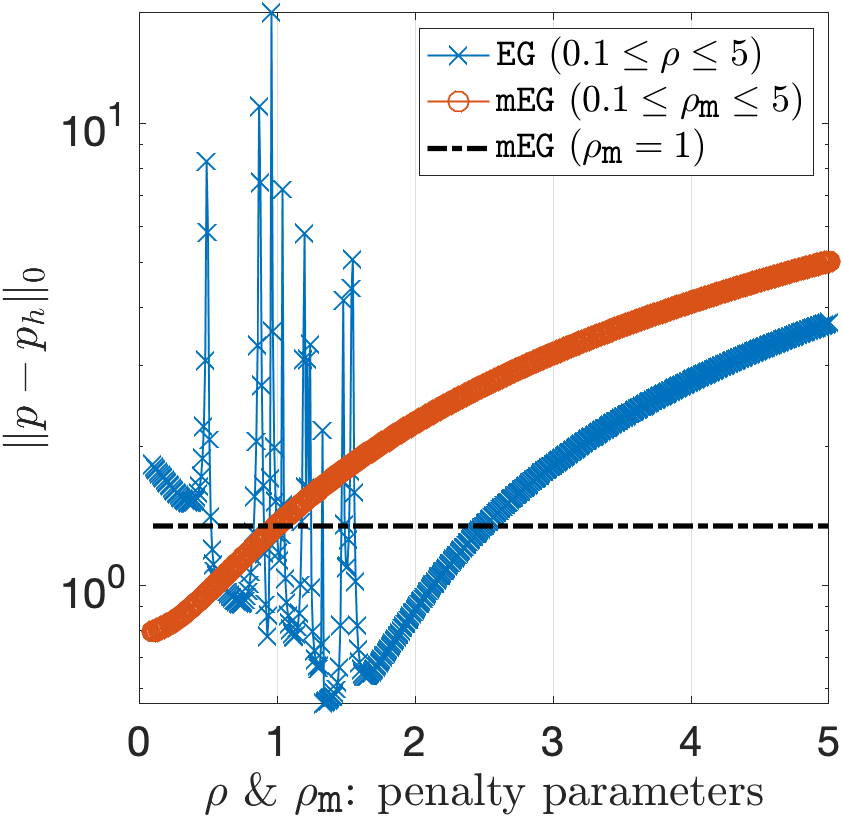}
    \hskip 5pt
    \includegraphics[width=.3\textwidth]{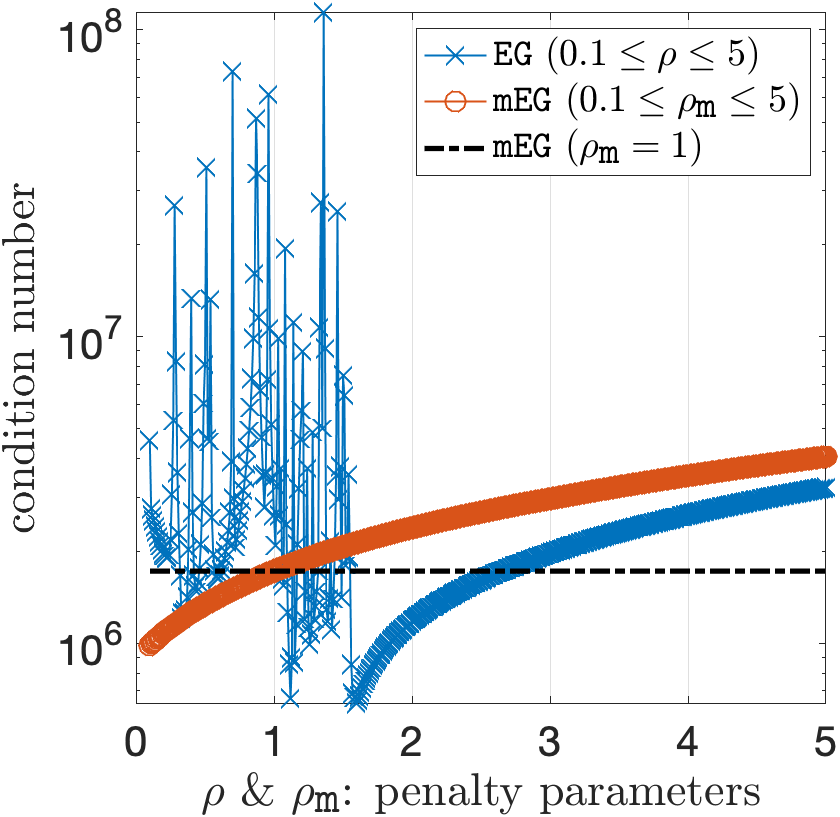}
    \caption{Errors and condition numbers of \texttt{EG} and \texttt{mEG} for $0.1\leq\rho\leq 5$ ($\nu=1$, $h=1/4$) in the 3D case.}
    \label{figure: 3Dpara-errors}
\end{figure}
Figure~\ref{figure: 3Dpara-errors} clearly shows the need of a sufficiently large penalty parameter for the \texttt{EG} method and the stability of the \texttt{mEG} method with any positive parameter.
\begin{figure}[h!]
\centering
\begin{tabular}{cc}
    \includegraphics[width=.35\textwidth]{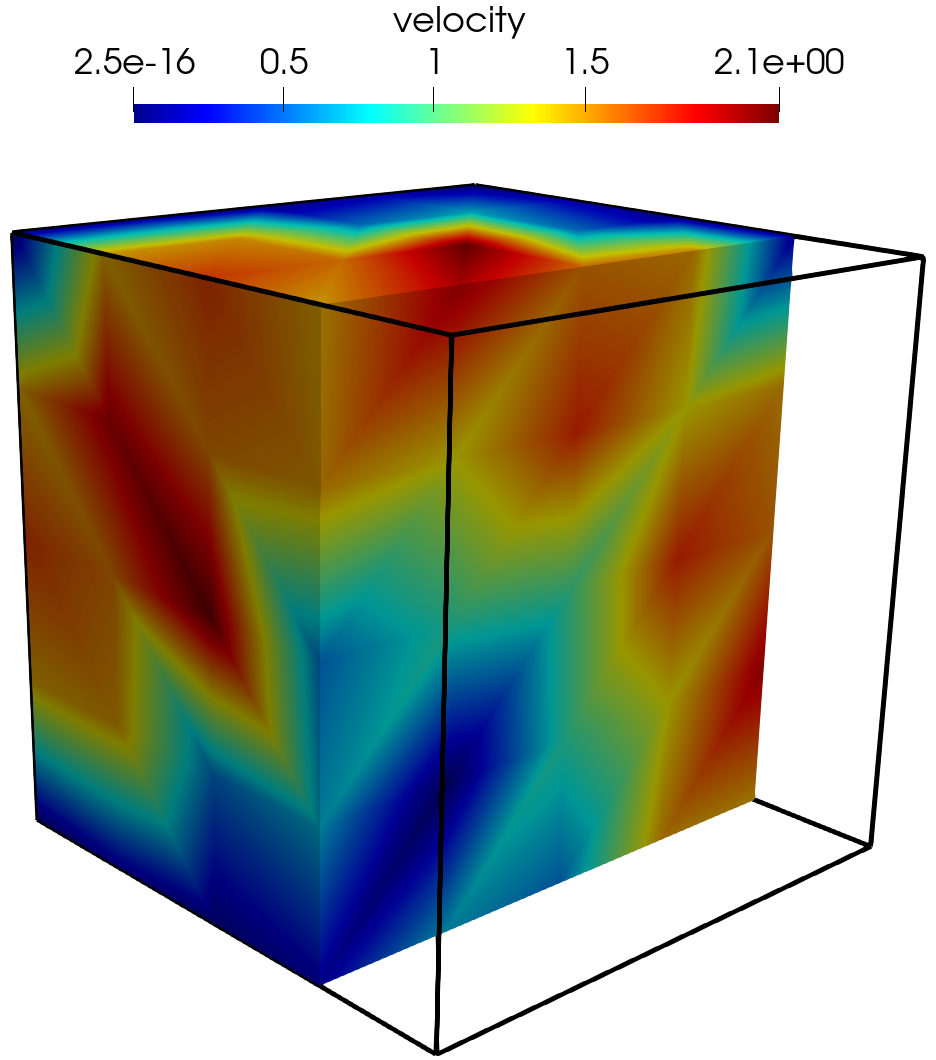}
   &\includegraphics[width=.35\textwidth]{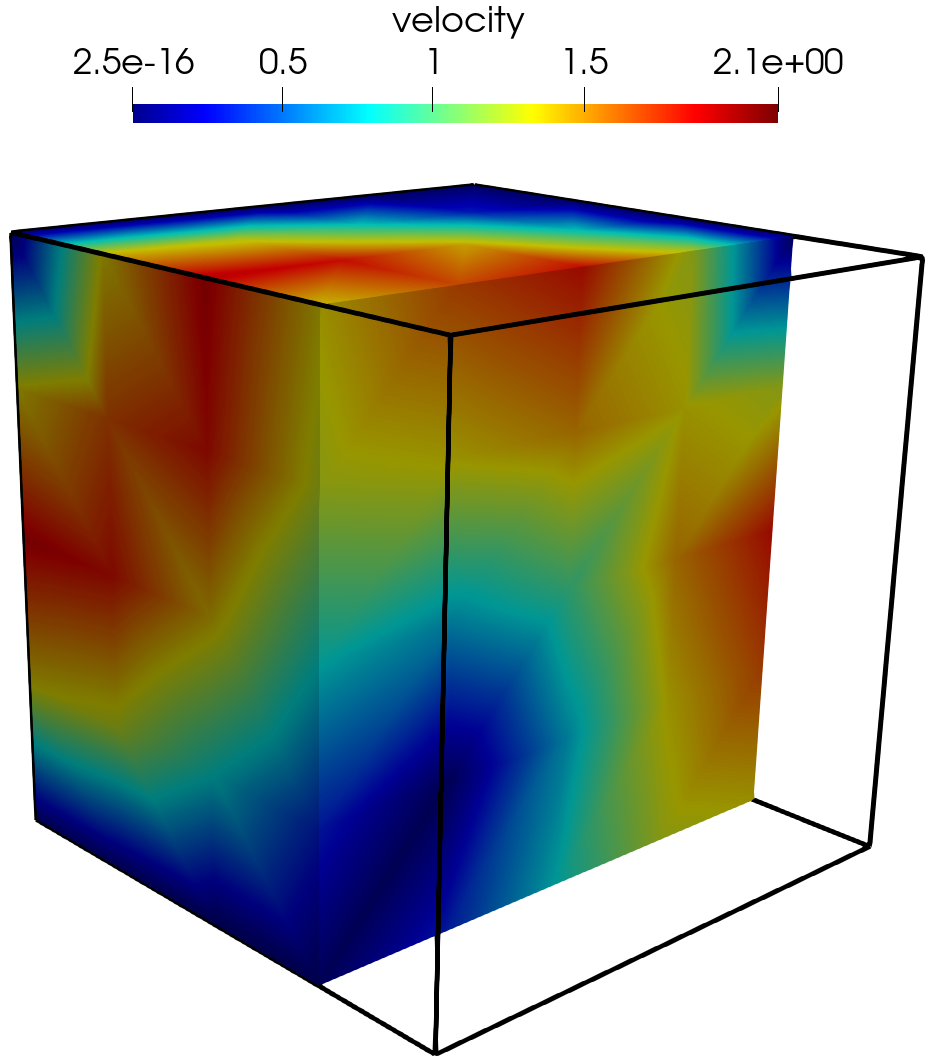}\\
   \texttt{EG} ($\rho=1$)  &\texttt{mEG} ($\rho_\texttt{m}=1$)
\end{tabular}
    \caption{Comparison of 3D numerical velocity solutions when $h=1/4$ and $\nu=1$.}
    \label{figure: 3d4_EGvsmEG_u}
\end{figure}
Figure~\ref{figure: 3d4_EGvsmEG_u} also displays the magnitudes of the numerical velocity of the \texttt{EG} and \texttt{mEG} methods with $\rho=\rho_\texttt{m}=1$.
Although both methods give similar color patterns, some sharp changes in color occur in the velocity of the \texttt{EG} method.
The sharp changes are caused by the insufficiently large parameter, and they make the numerical velocity inaccurate.

In addition, we focus on the effect of large penalty parameters on the errors and condition numbers.
In Figure~\ref{figure: 3Dpara-errors}, the condition numbers of both methods tend to increase with the parameters $\rho$ and $\rho_\texttt{m}$, which causes the increased velocity and pressure errors.
\begin{table}[ht]
    \centering
    \begin{tabular}{|c||c|c||c|c||c|c|}
    \hline
        &  \multicolumn{2}{c||}{\texttt{EG} ($\rho=10$)
        } & 
         \multicolumn{2}{c||}{\texttt{EG} ($\rho=2$)
        } &
        \multicolumn{2}{c|}{\texttt{mEG} ($\rho_{\texttt{m}}=1$)
        }\\
    \cline{2-7}   
       $h$  & {$\|\mathbf{u}-\mathbf{u}_h^{\texttt{EG}}\|_\mathcal{E}$} & {Rate} &
       {$\|\mathbf{u}-\mathbf{u}_h^{\texttt{EG}}\|_\mathcal{E}$} & Rate &
       {$\trinorm{\mathbf{u}-\mathbf{u}_h^{\texttt{mEG}}}$} & { Rate} \\ 
       \hline
       $1/4$ & 3.719e+0 & - &  2.518e+0 & - & 2.284e+0 & - \\
       \hline
       $1/8$  & 1.827e+0 & 1.03 & 1.228e+0 & 1.04 & 1.121e+0 & 1.03  \\
       \hline
       $1/16$   & 9.048e-1 & 1.01 & 6.052e-1 & 1.02 & 5.552e-1 & 1.01  \\
       \hline
       $1/32$  & 4.501e-1 & 1.01 & 3.007e-1 & 1.01 & 2.764e-1 & 1.01 \\
       \hline
       \hline
        $h$ & {$\|p-p_h^{\texttt{EG}}\|_0$} & {Rate} &
       {$\|p-p_h^{\texttt{EG}}\|_0$} & {Rate} &
       {$\|p-p_h^{\texttt{mEG}}\|_0$} & {Rate} \\ 
       \hline
       $1/4$ & 8.377e+0 & - &  8.819e-1 & - & 1.349e+0 & - \\
       \hline
       $1/8$  & 3.600e+0 & 1.22 & 3.611e-1 & 1.29 & 6.098e-1 & 1.15 \\
       \hline
       $1/16$   & 1.670e+0 & 1.11 & 1.688e-1 & 1.10 & 3.011e-1 & 1.02 \\
       \hline
       $1/32$  & 8.312e-1 & 1.01 & 8.411e-2 & 1.00 & 1.504e-1 & 1.00 \\
       \hline
    \end{tabular}
    \caption{A mesh refinement study for \texttt{EG} and \texttt{mEG} with varying mesh size $h$ and $\nu=1$ in the 3D case.}
    \label{table: 3dEGvsmEG}
\end{table}
In order to perform quantitative comparison, we choose $\rho=10$ and $\rho=2$ based on the results in Figure~\ref{figure: 3Dpara-errors} and report the pressure and velocity errors of the two cases in Table~\ref{table: 3dEGvsmEG}.
The pressure errors of the \texttt{EG} method with $\rho=10$ are 10 times bigger than those with $\rho=2$, even though their pressure errors decrease at the same rate.
Thus, in practice, a penalty parameter $\rho$ cannot be chosen too large due to this accuracy issue.
It may also be challenging to choose a proper $\rho$ because it varied with meshes.
On the other hand, 
for the \texttt{mEG} method with $\rho_\texttt{m}=1$, the convergence rates of the velocity and pressure errors are of at least first-order, and the \texttt{mEG} method yields smaller velocity errors than the \texttt{EG} method with $\rho=2$.
Therefore, with the \texttt{mEG} method, we can always safely choose $\rho_\texttt{m}=1$ to achieve reliable performance, so we can make the simulation lower-cost.


\subsubsection{Pressure-robust test}

To verify the pressure-robustness in the three-dimensional example \eqref{eqn: example2}, we consider the pattern of the error behaviors obtained from the \texttt{mEG} and \texttt{PR-mEG} methods when $\nu$ varies and the mesh size is fixed to $h = 1/16$.
\begin{figure}[h!]
    \centering
    \includegraphics[width=.35\textwidth]{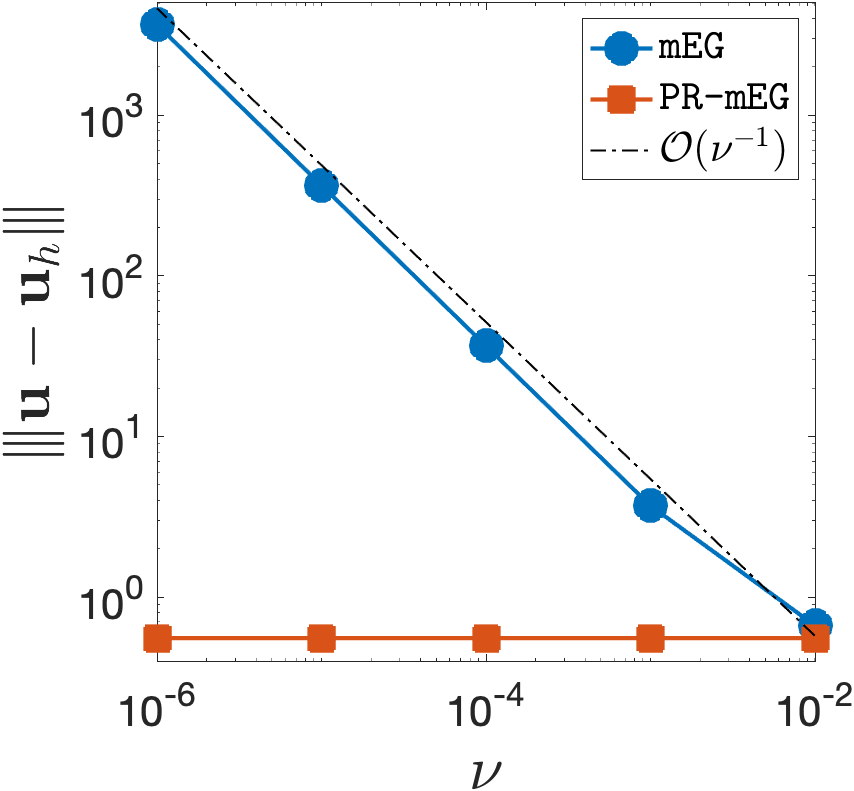}
    \hskip 20pt
    \includegraphics[width=.35\textwidth]{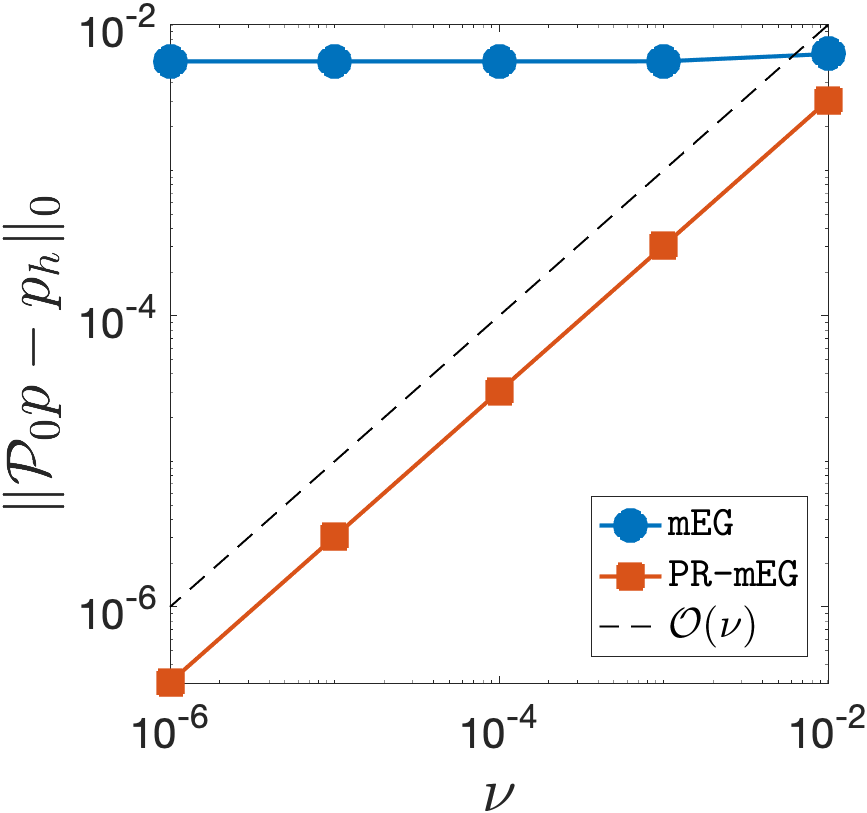}
    \caption{Error profiles of the \texttt{mEG} and \texttt{PR-mEG} methods with varying $\nu$ values and a fixed mesh size $h=1/16$ in the 3D case.}
    \label{figure: 3dmEGvsPRmEG}
\end{figure}
In Figure~\ref{figure: 3dmEGvsPRmEG}, we observe the same error behaviors as those in the two dimensional pressure-robust test.
That is, for the \texttt{mEG} method, the velocity errors are inversely proportional to $\nu$ while the pressure errors tend to stay constant.
On the other hand, for the \texttt{PR-mEG} method, the velocity errors seem independent of $\nu$, and the pressure errors decrease in proportion to $\nu$.

\begin{figure}[h!]
\centering
\begin{tabular}{cc}
\includegraphics[width=.35\textwidth]{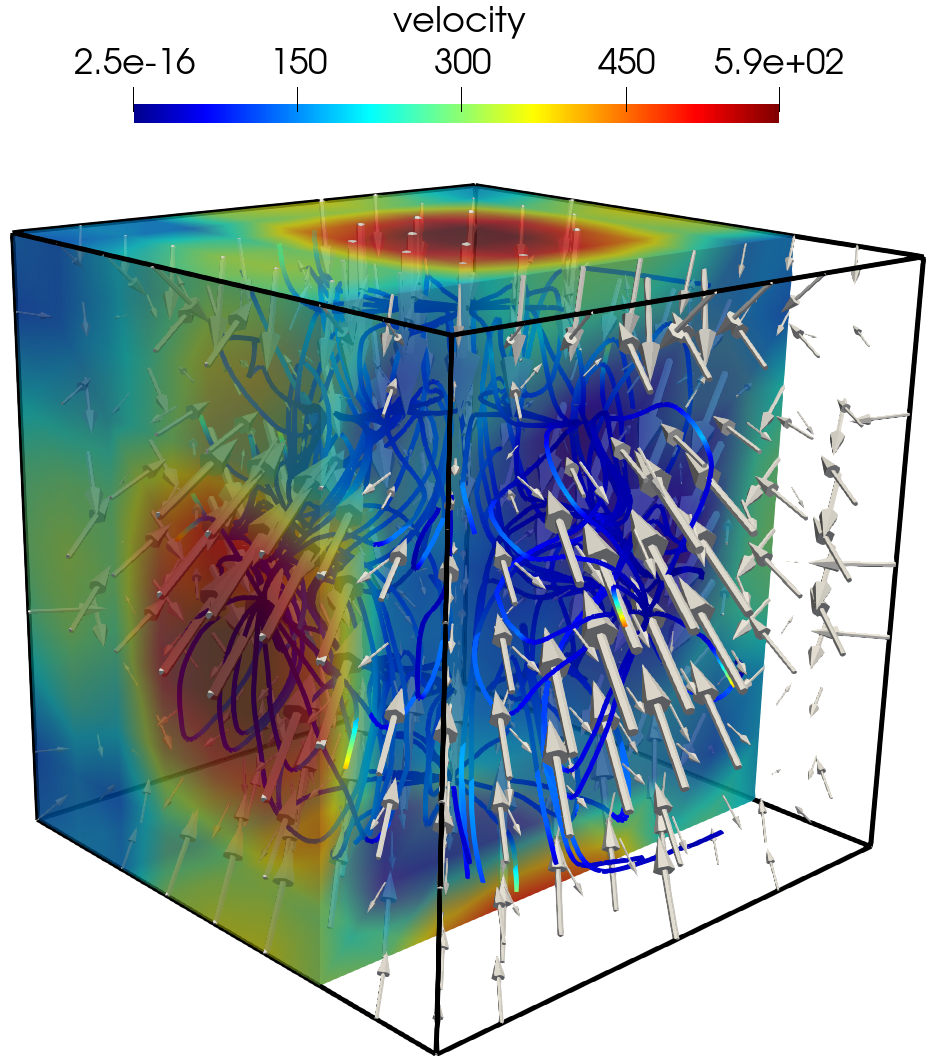}
&\includegraphics[width=.35\textwidth]{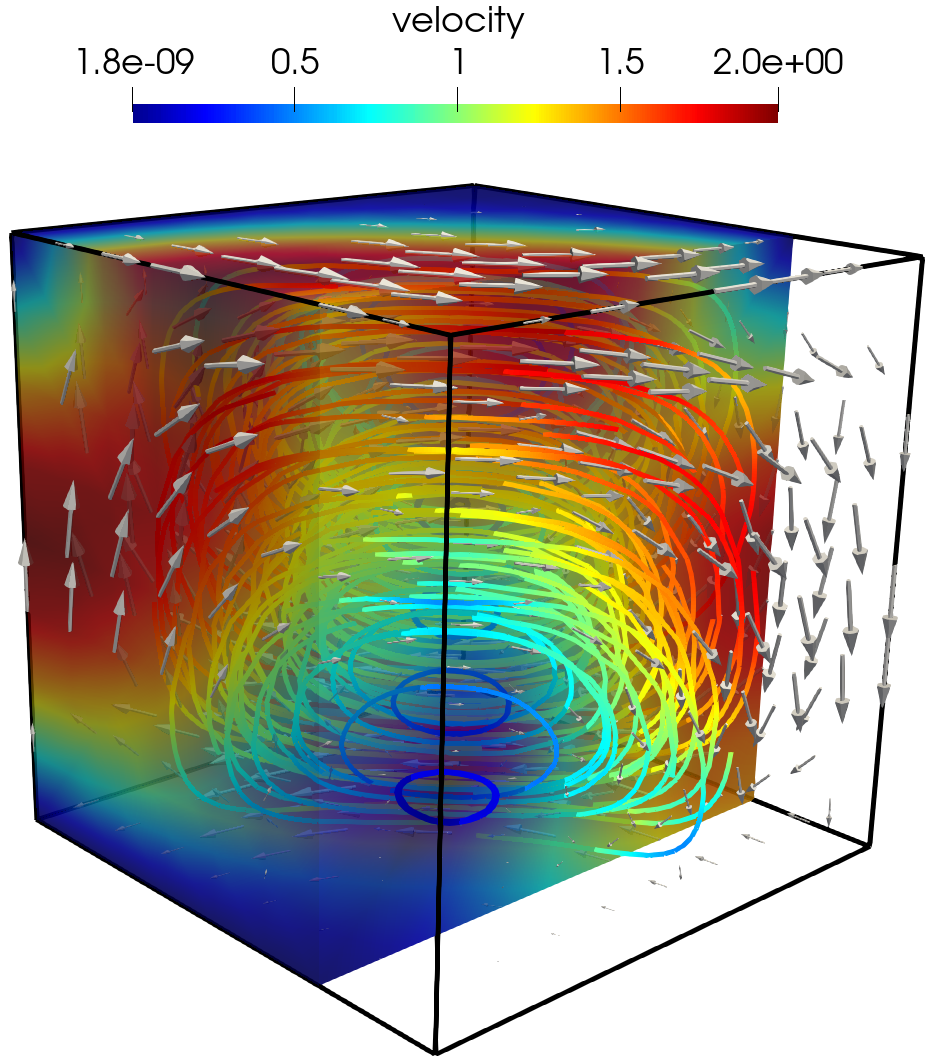}\\
\texttt{mEG}&\texttt{PR-mEG}
\end{tabular}
    \caption{Streamlines of the 3D numerical velocity when $h=1/8$ and $\nu=10^{-6}$.}
    \label{figure: 3dnumvelocity}
\end{figure}
Furthermore, Figure~\ref{figure: 3dnumvelocity} shows the streamlines of the velocity solutions of the \texttt{mEG} and \texttt{PR-mEG} methods when $h=1/8$ and $\nu=10^{-6}$.
In this case, the velocity error of the \texttt{mEG} method is 1.014e+4 while that of the \texttt{PR-mEG} method is 1.122e$+$0.
As shown in Figure~\ref{figure: 3dnumvelocity}, the numerical velocity of the \texttt{PR-mEG} method well captures the 3D vortex flow, while that of the \texttt{mEG} method is unable to do so.


\section{Conclusion}
\label{sec:conclusion}
In this paper, we proposed a low-cost, parameter-free, and pressure-robust Stokes solver based on the EG method operating with minimal degrees of freedom.
The weak derivatives, simply computed by the geometric data of elements, allowed the EG method to be free of penalty parameters and some IPDG trace terms.
With reduced computational complexity, the modified EG method preserved the minimal degrees of freedom and the optimal rates in convergence of the EG method.
Furthermore, we achieved the pressure-robustness for the modified EG method by the simple modification on the right-hand side.
We also confirmed the improved theoretical results through the several numerical tests with two- and three-dimensional examples.
The idea of using weak derivatives can be applied to improve other numerical schemes employing the symmetric IPDG formulation.
The extension of the idea to numerical schemes for the biharmonic equation will be one of our future research directions.
We expect that the weak derivatives corresponding to the biharmonic equation provide significant computational advantages in solving application problems involving the biharmonic equation numerically.

	\bibliography{Stokes}
	
\end{document}